\author[M.~Caprio]{Michele Caprio and Teddy Seidenfeld}
\address{PRECISE Center, Department of Computer and Information Science,
University of Pennsylvania, 3330 Walnut Street, Philadelphia, PA 19104}
\email{caprio@seas.upenn.edu}
\urladdr{\url{https://precise.seas.upenn.edu/people/doctoral-students}}  
\address{Departments of Philosophy and Statistics, Carnegie Mellon University, 5000 Forbes Ave, Pittsburgh, PA 15213}
\email{teddy@stat.cmu.edu}
\urladdr{\url{https://www.cmu.edu/dietrich/philosophy/people/faculty/seidenfeld.html}}
\keywords{Constriction, dilation, sets of probabilities, evidence, conditioning, forgetting}
\subjclass[2010]{Primary: 62A01; Secondary: 60A10, 60A99}
\title{Constriction for sets of probabilities}
\newcommand{\vertiii}[1]{{\left\vert\kern-0.25ex\left\vert\kern-0.25ex\left\vert #1 
    \right\vert\kern-0.25ex\right\vert\kern-0.25ex\right\vert}}
   \def\MR#1{}
\theoremstyle{definition} 
\let\olddefi\defi
\renewcommand{\defi}{\olddefi\normalfont}
\let\oldrmk\rmk
\renewcommand{\rmk}{\oldrmk\normalfont}
\newtheorem{theorem}{Theorem}
\newtheorem{lemma}[theorem]{Lemma}
\newtheorem{proposition}[theorem]{Proposition}
\newtheorem{corollary}[theorem]{Corollary}
\newtheorem{definition}[theorem]{Definition}
\newtheorem{remark}[theorem]{Remark}
\newtheorem{example}[theorem]{Example}
\providecommand{\MR}[1]{}
\providecommand{\MR}{\relax\ifhmode\unskip\space\fi MR }
\providecommand{\href}[2]{#2}
\begin{document}

\begin{abstract}
Given a set of probability measures $\mathcal{P}$ representing an agent's knowledge on the elements of a sigma-algebra $\mathcal{F}$, we can compute upper and lower bounds for the probability of any event $A\in\mathcal{F}$ of interest. A procedure generating a new assessment of beliefs is said to constrict $A$ if the bounds on the probability of $A$ after the procedure are contained in those before the procedure. It is well documented that (generalized) Bayes' updating does not allow for constriction, for all $A\in\mathcal{F}$. In this work, we show that constriction can take place with and without evidence being observed, and we characterize these possibilities.
\end{abstract}

\maketitle
\thispagestyle{empty}

\section{Introduction}\label{intro}
Call $\Delta(\Omega,\mathcal{F})$ the space of countably additive probability measures on a measurable space $(\Omega,\mathcal{F})$ of interest and let $\mathcal{P} \subseteq \Delta(\Omega,\mathcal{F})$ be a set of probability measures. Then $\underline{P}(A)=\inf_{P\in\mathcal{P}}P(A)$ is called the lower probability of $A$, and its conjugate $\overline{P}(A)=1-\inf_{P\in\mathcal{P}}P(A^c)=\sup_{P\in\mathcal{P}}P(A)$ is called the upper probability of $A$. They are two of the main building blocks of the literature known as \textit{imprecise probability theory} \cite{walley}. Like in measure theory, where if outer and inner measures of a set coincide, then we say that the set has a measure, if upper and lower probabilities coincide, then they are an ordinary probability measure, and $\mathcal{P}$ is a singleton. The parallel we just drew between imprecise probability theory and measure theory is not merely heuristic in nature: \cite[Section 3.1.5]{walley} shows that the natural extension of a coherent lower probability from an algebra to the power set is the corresponding inner measure.

The reasons for studying imprecise probabilities are discussed at length in \cite{augustin_ed, walley} and references therein; in this work we focus especially on the motivations expressed in \cite{ellsberg,marinacci2}. There, the authors point out how specifying sets of probabilities -- and thus their ``boundary elements'', namely lower and upper probabilities -- accounts for the ambiguity faced by the agent carrying out the analysis. This means that since the agent does not know the true data generating process governing the experiment of interest, they may want to take advantage of the flexibility of IP theory and specify a set of probability measures to represent their ignorance. The set will be ``wider'', that is, the difference between $\overline{P}(A)$ and $\underline{P}(A)$ will be larger for all $A\in\mathcal{F}$, the higher the uncertainty faced by the agent.



The aim of this paper is to study the constriction phenomenon that takes place after a given procedure. 
\begin{definition}\label{contr_def}
Consider an event of interest $A\in\mathcal{F}$, a generic set of probability measures $\mathcal{P}\subseteq \Delta(\Omega,\mathcal{F})$, and denote by $\underline{P}$ and $\overline{P}$ the lower and upper probabilities associated with $\mathcal{P}$, respectively. Call \cjRL{no} a generic procedure that produces a new assessment of beliefs, and denote by $\underline{P}^\text{\cjRL{no}}$ and $\overline{P}^\text{\cjRL{no}}$ the lower and upper probabilities resulting from such procedure, respectively.\footnote{We use Hebrew letter \cjRL{no} to denote the procedure because the Hebrew word for procedure, \cjRL{nohal} (pronounced \textit{nohal}), begins with \cjRL{no}. In addition, Latin, Greek, and Cyrillic letters $p,P,\pi,\Pi$ -- that could be associated with the word ``procedure'' -- are usually associated with probabilities and partitions, while Greek letter $\varpi$ can be easily confused with $\omega$, which we will use to denote an element of the state space $\Omega$ of interest.} Then, we say that procedure \cjRL{no} \textit{(strictly) uniformly constricts} $A$, in symbols $\text{\cjRL{no}} \looparrowleft A$, if $\underline{P}^\text{\cjRL{no}}(A)>\underline{P}(A)$ and $\overline{P}^\text{\cjRL{no}}(A)<\overline{P}(A)$. We say that \cjRL{no} \textit{weakly uniformly constricts} $A$ if one of the two inequalities is weak.
\end{definition}
Trivially, if \cjRL{no} strictly uniformly constricts $A$, then \cjRL{no} weakly uniformly constricts $A$. In the remainder of the paper, we refer to uniform constriction (UC) simply as ``constriction''. UC is sometimes called contraction \cite{prob.kin,gong,agm}. We prefer constriction -- as denoted in \cite{herron} -- because contraction is used in the belief revision literature to denote an instance of corrigibility for full beliefs, which happens when an agent gives up some current evidence by moving to a logically weaker body of evidence \cite{levi3}. In addition when we say that \cjRL{no} produces a new assessment of beliefs, we mean that procedure \cjRL{no} outputs (a set of) probabilities that represent the belief of the agent around the elements of $\mathcal{F}$. This should not be confused with AGM theory \cite{agm} where procedures generate a new set of full beliefs (sets of sentences). We keep the same terminology as, given the context, no confusion arises.

Our interest for constriction stems from surprising results involving the opposite phenomenon, called \textit{dilation}, which was first observed in the context of (generalized) Bayes' updating of $\mathcal{P}$ \cite{seidenfeld_dil}. We remark that $\mathcal{P}$ need not be closed or convex. Pick any $A\in\mathcal{F}$, and call $\mathcal{P}(A):=\{P(A): P\in\mathcal{P}\}$. Let $X:\Omega \rightarrow \mathbb{R}$ be a $\mathcal{P}$-measurable random variable, that is, let it be $P$-measurable for all $P\in\mathcal{P}$. Call then $\mathcal{I}$ a generic index set, and let $\mathbf{X}:=\{X=x_i\}_{i\in\mathcal{I}}$ be the sample space of measurable events associated with $X$. Denote by $\mathcal{P}(A\mid x_i):=\{P(A\mid X=x_i): P\in\mathcal{P}\}$ the set of conditional probabilities of event $A$, given $X=x_i$. 
In order to avoid issues with conditional probability given a $P$-null event, $P\in\mathcal{P}$, we assume that the $P$'s in $\mathcal{P}$ agree on those outcomes $\{X=x_i\}$ that are $P$-null. Let us denote by $E=X^{-1}(x)\subseteq\Omega$ the evidence collected after experiment $\mathbf{X}$. A weaker notion of constriction is the following. 
\begin{definition}\label{str_pw_constr}
    The (generalized) Bayes' updating procedure, denoted by $\text{\cjRL{no}}=(B,E)$,\footnote{In $\text{\cjRL{no}}=(B,E)$, letter $B$ denotes generalized Bayes' updating, and $E=X^{-1}(x)$ is the conditioning set. Throughout the paper, we refer to generalized Bayes' updating simply as ``conditioning'', while other techniques are referred to as ``updating rules''.} strictly pointwise constricts $A$ if, for each $x_i$ in a set of $\mathcal{P}$-probability $1$ (that is, a set of ${P}$-probability $1$, for all $P\in\mathcal{P}$), $\underline{P}(A) < \underline{P}(A\mid X=x_i)$ and $\overline{P}(A) > \overline{P}(A\mid X=x_i)$. It weakly pointwise constricts $A$ if one of the two inequalities is weak.

    Generalized Bayes' updating merely pointwise constricts $A$ if it pointwise constricts $A$, but $\inf_{i\in\mathcal{I}} \underline{P}(A\mid X=x_i)=\underline{P}(A)$ and $\sup_{i\in\mathcal{I}} \overline{P}(A\mid X=x_i)=\overline{P}(A)$. Strict and weak mere pointwise constriction are defined similarly to before.
\end{definition}
To see that pointwise constriction is weaker than uniform constriction, notice that we obtain strict UC if $\inf_{i\in\mathcal{I}} \underline{P}(A\mid X=x_i)>\underline{P}(A)$ and $\sup_{i\in\mathcal{I}} \overline{P}(A\mid X=x_i)<\overline{P}(A)$, and weak UC if one of the latter two inequalities is weak. 

Define now, for all $P\in\mathcal{P}$, $\mathbf{X}^{A+}_P:=\{x \in\mathbf{X}:P(A\mid X=x)>P(A)\}$ and $\mathbf{X}^{A-}_P:=\{x \in\mathbf{X}:P(A\mid X=x)<P(A)\}$. The following lemma comes immediately from the law of conditional expectations.


\begin{lemma}\label{lemma_cond_bayes}
Pick any $A\in\mathcal{F}$. Then, for all $P\in\mathcal{P}$, $P(\mathbf{X}^{A+}_P)>0$ if and only if  $P(\mathbf{X}^{A-}_P)>0$.
\end{lemma}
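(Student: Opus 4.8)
The plan is to apply the law of conditional expectations to the centered quantity $P(A\mid X=x)-P(A)$ and to extract the claim from the basic fact that a mean-zero quantity must have its positive and negative parts vanish together.

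First I would introduce the function $D_P(x):=P(A\mid X=x)-P(A)$ on $\mathbf{X}$ and record that, by the law of conditional expectations (equivalently, the law of total probability over the partition $\{X=x_i\}_{i\in\mathcal{I}}$), the $P$-expectation of $D_P(X)$ is zero, i.e. $\sum_{i\in\mathcal{I}} D_P(x_i)\,P(X=x_i)=0$. Here only the atoms $x_i$ with $P(X=x_i)>0$ contribute, so the sum is a genuine (at most countable) series; the standing assumption that the measures in $\mathcal{P}$ agree on their null outcomes guarantees that $P(A\mid X=x)$ is well defined wherever it appears.

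Next I would split this sum according to the sign of $D_P$. Writing $\mathbf{X}^{A0}_P:=\{x\in\mathbf{X}:D_P(x)=0\}$, the three sets $\mathbf{X}^{A+}_P$, $\mathbf{X}^{A-}_P$, $\mathbf{X}^{A0}_P$ partition $\mathbf{X}$, and since the $\mathbf{X}^{A0}_P$ terms drop out the zero-sum identity becomes
\[
\sum_{x\in\mathbf{X}^{A+}_P} D_P(x)\,P(X=x) \;=\; -\sum_{x\in\mathbf{X}^{A-}_P} D_P(x)\,P(X=x).
\]
Call these common values $S^+\geq 0$ and $S^-\geq 0$; both sides are nonnegative because $D_P>0$ on $\mathbf{X}^{A+}_P$ and $D_P<0$ on $\mathbf{X}^{A-}_P$, so that $S^+=S^-$.

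The crux is then the elementary observation that $S^+=0$ if and only if $P(\mathbf{X}^{A+}_P)=0$: on $\mathbf{X}^{A+}_P$ each summand $D_P(x)\,P(X=x)$ is strictly positive exactly on those atoms with $P(X=x)>0$, so the sum vanishes precisely when every such atom carries zero mass, i.e. when $P(\mathbf{X}^{A+}_P)=0$. The symmetric argument gives $S^-=0$ if and only if $P(\mathbf{X}^{A-}_P)=0$. Combining these with $S^+=S^-$ yields $P(\mathbf{X}^{A+}_P)=0 \Leftrightarrow P(\mathbf{X}^{A-}_P)=0$, whose contrapositive is exactly the stated equivalence. I do not anticipate a serious obstacle; the only points requiring care are the strictness bookkeeping in the last step (that a sum of strictly signed terms weighted by nonnegative masses vanishes iff all the relevant masses are zero) and confirming that countable additivity legitimizes the term-by-term manipulation of the series.
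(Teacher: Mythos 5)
Your proof is correct and is essentially the paper's own argument: the paper gives no separate proof of this lemma, stating only that it ``comes immediately from the law of conditional expectations,'' and your write-up is precisely the standard elaboration of that one-liner (center $P(A\mid X=x)$ at $P(A)$, use that the weighted average of the discrepancies is zero, and note that the positive and negative contributions must therefore vanish together). The only cosmetic caveat is that you phrase everything as a countable sum over atoms; if $X$ is not discrete the same argument goes through verbatim with the sum replaced by an integral against the law of $X$, which is what the tower property gives in general.
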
 


We then have two propositions that give sufficient conditions that preclude even weak constriction when collecting evidence in the form of an experiment to learn the value of the random variable $X$ and using generalized Bayes' updating.

\begin{proposition}\label{prop_bayes}
Pick any $A\in\mathcal{F}$. If $\mathcal{P}(A)$ is closed in the Euclidean topology, then no experiment $\mathbf{X}$ is such that $(B,E)$ weakly uniformly constricts $A$.
\end{proposition}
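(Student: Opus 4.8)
\emph{The plan is to argue by contradiction, using that a closed bounded subset of $\mathbb{R}$ is compact so that the extremal values $\underline{P}(A)$ and $\overline{P}(A)$ are actually attained by members of $\mathcal{P}$.} Suppose some experiment $\mathbf{X}$ weakly uniformly constricts $A$. By the characterization recorded just before the statement, this means $\inf_{i\in\mathcal{I}}\underline{P}(A\mid X=x_i)\ge\underline{P}(A)$ and $\sup_{i\in\mathcal{I}}\overline{P}(A\mid X=x_i)\le\overline{P}(A)$, with at least one of the two inequalities strict. I would split into two cases according to which bound strictly improves; the two are mirror images, so I describe the upper-bound case $\sup_{i\in\mathcal{I}}\overline{P}(A\mid X=x_i)<\overline{P}(A)$.

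Since $\mathcal{P}(A)\subseteq[0,1]$ is closed and bounded, it is compact, hence $\overline{P}(A)=\sup_{P\in\mathcal{P}}P(A)$ is attained: there is $P^\ast\in\mathcal{P}$ with $P^\ast(A)=\overline{P}(A)$. The key identity is the law of conditional expectations (the same one underlying Lemma~\ref{lemma_cond_bayes}), namely $P^\ast(A)=\sum_{i\in\mathcal{I}}P^\ast(A\mid X=x_i)\,P^\ast(X=x_i)$, where only the outcomes with $P^\ast(X=x_i)>0$ contribute and the weights sum to $1$. For each such outcome, $P^\ast(A\mid X=x_i)\le\overline{P}(A\mid X=x_i)\le\sup_{j}\overline{P}(A\mid X=x_j)<\overline{P}(A)$. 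Thus $P^\ast(A)$ is a convex combination of numbers all strictly below $\overline{P}(A)$, forcing $P^\ast(A)<\overline{P}(A)$, which contradicts $P^\ast(A)=\overline{P}(A)$. The lower-bound case is identical after replacing $P^\ast$ by a minimizer $P_\ast$ with $P_\ast(A)=\underline{P}(A)$ and reversing all inequalities.

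The role of the hypothesis is then transparent: closedness of $\mathcal{P}(A)$ is used exactly once, to guarantee that the relevant extremum is realized by an actual measure in $\mathcal{P}$, so that the averaging identity can be applied \emph{at that measure}. Without attainment the argument breaks, which is consistent with constriction being possible in other settings. I expect the only delicate bookkeeping, rather than any genuine obstacle, to be twofold: first, restricting the sum to positive-probability outcomes so that the conditional probabilities in the law of total probability are well defined --- here the standing assumption that the measures in $\mathcal{P}$ agree on $P$-null outcomes $\{X=x_i\}$ is what keeps this clean; and second, being explicit that weak constriction still supplies at least one strict inequality, which is all the contradiction requires.
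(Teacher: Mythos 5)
Your proof is correct and follows essentially the same route as the paper's: closedness of $\mathcal{P}(A)$ guarantees the extremal values are attained by actual measures in $\mathcal{P}$, and the law of total probability applied at such a measure (which is exactly what underlies Lemma~\ref{lemma_cond_bayes}, the tool the paper invokes) rules out any strict improvement of either bound. Your version merely applies the averaging identity directly rather than routing it through the $\mathbf{X}^{A\pm}_P$ sets.
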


\begin{proposition}\label{prop_bayes2}
Pick any $A\in\mathcal{F}$. No simple experiment $\mathbf{X}$ is such that $(B,E)$ weakly pointwise constricts $A$. That is, if $X$ is a simple random variable (i.e. if the index set $\mathcal{I}$ for the sample space $\mathbf{X}$ is finite with $P$-probability $1$, for all $P\in\mathcal{P}$), then $(B,E)$ does not weakly pointwise constrict $A$.
\end{proposition}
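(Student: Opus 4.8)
The plan is to exploit the one feature that distinguishes a simple experiment---finiteness of the essential index set---to upgrade the law of conditional expectation behind Lemma \ref{lemma_cond_bayes} into two quantitative envelope inequalities, and then to contradict the ``strict on one side'' requirement built into weak pointwise constriction. As a preliminary reduction, I would first replace the quantifier ``for each $x_i$ in a set of $\mathcal{P}$-probability $1$'' by a statement about the finitely many non-null outcomes. Since all $P\in\mathcal{P}$ agree on which $\{X=x_i\}$ are $P$-null and $X$ is simple, there is a common finite collection $x_1,\dots,x_n$ of non-null outcomes, and every set of $\mathcal{P}$-probability $1$ must contain all of them (excluding any non-null $x_j$ would drop the probability below $1$ under some $P$). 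Hence it suffices to exhibit a single non-null $x_j$ at which the required inequalities fail, because such an $x_j$ lies in every candidate probability-one set.

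Next I would establish the two envelope bounds. For every $P\in\mathcal{P}$, the law of conditional expectation gives the finite convex combination $P(A)=\sum_{j=1}^n P(A\mid X=x_j)\,P(X=x_j)$, whence $\min_j P(A\mid X=x_j)\le P(A)\le\max_j P(A\mid X=x_j)$. Bounding each conditional below by $\underline{P}(A\mid X=x_j)$ and taking the infimum over $P\in\mathcal{P}$ yields $\min_j \underline{P}(A\mid X=x_j)\le\underline{P}(A)$, and the dual argument gives $\max_j \overline{P}(A\mid X=x_j)\ge\overline{P}(A)$. Phrasing the derivation through $\inf_P$ and $\sup_P$ directly is deliberate, so that no attainment of the outer bounds $\underline{P}(A),\overline{P}(A)$ over $\mathcal{P}$ is needed.

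I would then close the argument by contradiction. Weak pointwise constriction forces one of the two bounds to move strictly at every outcome: either $\underline{P}(A\mid X=x_j)>\underline{P}(A)$ for all $j$, or $\overline{P}(A\mid X=x_j)<\overline{P}(A)$ for all $j$. In the first case, taking the minimum over the finite index set gives $\min_j \underline{P}(A\mid X=x_j)>\underline{P}(A)$, contradicting the first envelope bound; in the second case, taking the maximum gives $\max_j \overline{P}(A\mid X=x_j)<\overline{P}(A)$, contradicting the second. Either way one reaches a contradiction, so no simple experiment can weakly pointwise constrict $A$.

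The main obstacle, and the only place the hypotheses genuinely do work, is the passage from ``strict at each outcome'' to ``strict after taking the minimum or maximum over outcomes''. This inference is valid precisely because the index set is finite, so the extremum is attained and strict inequalities survive it; for an infinite index set it fails, since an infimum of strictly positive gaps can vanish. This is exactly why Proposition \ref{prop_bayes} must invoke a closedness assumption to preclude constriction in the general case, whereas here finiteness alone is enough, and it is the point I would check most carefully when writing out the details.
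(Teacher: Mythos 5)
Your argument is correct under the natural reading of Definition~\ref{str_pw_constr}, but it is a genuinely different proof from the paper's. You derive the two envelope inequalities $\min_j \underline{P}(A\mid X=x_j)\le\underline{P}(A)$ and $\max_j \overline{P}(A\mid X=x_j)\ge\overline{P}(A)$ directly from the tower property, phrased through $\inf_{P\in\mathcal{P}}$ so that no attainment (hence no closedness of $\mathcal{P}(A)$) is needed, and then let finiteness carry the strict inequality through the extremum. This is in effect a finite, closure-free version of Lemma~\ref{lem-no-contr}, and it dispenses with any case analysis. The paper instead argues by reductio through Lemma~\ref{lemma_cond_bayes}: it first invokes Proposition~\ref{prop_bayes} to dispose of the case where $\mathcal{P}(A)$ is closed, then, assuming $\mathcal{P}(A)$ open below, it extracts witnesses $P_{1i}(A)\in\mathcal{P}(A)$ lying below every conditional value at $x_i$, sets $P_1(A):=\min_i P_{1i}(A)$ (here is where simplicity of $X$ enters for the paper), and chooses $P_0\in\mathcal{P}$ with $\underline{P}(A)<P_0(A)<P_1(A)$; then $P_0(A\mid x_i)\ge P_1(A)>P_0(A)$ for every $i$, so $P_0(\mathbf{X}^{A+}_{P_0})=1$ while $P_0(\mathbf{X}^{A-}_{P_0})=0$, contradicting Lemma~\ref{lemma_cond_bayes}. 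Your route is shorter, self-contained, and makes the role of finiteness more transparent; the paper's route buys a slightly stronger conclusion, as explained next.

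The one point to check is your dichotomy ``either $\underline{P}(A\mid X=x_j)>\underline{P}(A)$ for all $j$, or $\overline{P}(A\mid X=x_j)<\overline{P}(A)$ for all $j$.'' This presumes that the \emph{same} inequality is the strict one at every outcome, which is the reading of ``one of the two inequalities is weak'' consistent with Definition~\ref{contr_def}. If instead the strict side is allowed to vary with $x_j$ (the lower bound strict at some outcomes, the upper bound strict at others), both of your envelope bounds can hold with equality and your contradiction evaporates. The paper's construction is insensitive to this, because it uses only the weak inequality $P_{1i}(A)\le P_0(A\mid x_i)$ at each $i$ and obtains strictness from the single gap $P_0(A)<P_1(A)$. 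If you intend to cover the outcome-dependent reading, you would need to supplement your argument (e.g.\ with a single-$P_0$ construction of the paper's type); under the uniform reading, your proof stands as written.
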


The next example shows that if $\mathcal{P}(A)$ is open and $X$ is not simple, we can have strict mere pointwise constriction. Together with Propositions \ref{prop_bayes} and \ref{prop_bayes2}, this exhausts the possible cases for generalized Bayes' updating.

\begin{example}\label{ex_pw_constr}
    Let $A$ be a (measurable) event. For $0.4 < x < 0.6$ stipulate that $P_x(A) = x$, and let $\mathcal{P}(A) = \{P_x(A)\}$. So, $\underline{P}(A) = 0.4$ and $\overline{P}(A) = 0.6$, but $\mathcal{P}(A)$ is an open set. Let $f$ be a well ordering of the rational numbers in the open interval $(0.4, 0.6)$, denoted as the set $\mathbb{Q}_{(0.4,0.6)}$. So letting $\mathbb{N}=\{1,2,\ldots\}$ denote the natural numbers, we have $f: \mathbb{Q}_{(0.4,0.6)} \leftrightarrow \mathbb{N}$ is a $1-1$ (onto) function. Denote by $q_n=f^{-1}(n)$. Again, $q_n \in \mathbb{Q}_{(0.4,0.6)}$. Let $N$ be a $P_X$-measurable random variable where for each $0.4 < x < 0.6$, the likelihood ratio satisfies
\begin{equation}\label{lik_rat}
    \frac{P_x(N=n \mid A)}{P_x(N=n \mid A^c)}=\frac{(1-x)q_n}{x(1-q_n)}.
\end{equation}
Note that \eqref{lik_rat} constraints the distribution $P_x(N)$ without defining it. But \eqref{lik_rat} is coherent since, for each $0.4 < x < 0.6$, there are infinitely many values of $q_n$ for which the likelihood ratio is greater than $1$, and infinitely many values $q_n$ for which the ratio is less than $1$.\footnote{Here we see where the condition that $\mathcal{P}(A)$ is an open set is necessary. Condition \eqref{lik_rat} is incoherent when $P_x(A) = 0.4$ or $P_x(A) = 0.6$. Then, for all values of $q_n$, the likelihood ratio \eqref{lik_rat} would have values only to one side of $1$, in contradiction with the law of total probability.} By a trivial application of Bayes’ Theorem,
$$\frac{P_x(A \mid N=n)}{P_x(A^c \mid N=n)}=\frac{q_n}{1-q_n},$$
which is constant over $\mathcal{P}(A)$. That is, with respect to set $\mathcal{P}(A)$, the family of conditional probabilities of $A$, given $N = n$, is determinate despite the fact that the family of unconditional probabilities of $A$ is indeterminate. Thus, for each $n\in\mathbb{N}$,
$$0.4 < \underline{P}(A \mid N=n) = q_n = \overline{P}(A \mid N=n) < 0.6$$ and Bayes' updating, given $N=n$, strictly merely pointwise constricts $A$. Note well that the strict constriction is not uniform over $\mathbb{N}$ as $\inf_{n\in\mathbb{N}} \underline{P}(A \mid N=n) = 0.4 = \underline{P}(A)$, and $\sup_{n\in\mathbb{N}} \overline{P}(A \mid N=n) = 0.6 = \overline{P}(A)$.
\end{example}

Let us now give an example of dilation, borrowed from \cite{seidenfeld_dil}.  It illustrates how, using generalized Bayes' updating, imprecise probabilities for an event $A$ increase imprecision, for each possible outcome of an experiment.

\begin{example}
Suppose we flip a fair coin twice so that the flips may be dependent. Denote by $H_i$ and $T_i$ outcome ``heads'' and ``tails'', respectively, in tosses $i\in\{1,2\}$. Let 
$$\mathcal{P}:=\left\lbrace{P : P(H_1)=P(H_2)=\frac{1}{2} \text{, } P(H_1 \cap H_2)=p}\right\rbrace_{p\in\left[0,\frac{1}{2}\right]}.$$
Now, suppose we flip the coin; we have $P(H_2)=1/2$, but 
$$0=\underline{P}^B(H_2\mid H_1)<\underline{P}(H_2)=\frac{1}{2}=\overline{P}(H_2)<\overline{P}^B(H_2\mid H_1)$$
and
$$0=\underline{P}^B(H_2\mid T_1)<\underline{P}(H_2)=\frac{1}{2}=\overline{P}(H_2)<\overline{P}^B(H_2\mid T_1),$$
where $\overline{P}^B(H_2\mid H_1)=\overline{P}^B(H_2\mid T_1)=1$. As we can see, we start with a precise belief about the second toss and, no matter what the outcome of the first toss is, we end up having vacuous beliefs about the second toss.
\end{example}

The fact that Bayes' rule of conditioning -- arguably the most popular beliefs updating procedure -- can give rise to dilation is one motivation for exploring updating techniques that instead admit constriction. 
That is the focus of our work.

The paper is divided as follows. Section \ref{no_data} studies procedures that allow constriction to take place when no new evidence is collected. Theorems \ref{generic-no-data} and \ref{generic-no-data-cor} are the main results and give very general conditions for procedures to give the opportunity for constriction in the absence of new collected evidence. In section \ref{hybrid}, each individual in a group applies a (convex) personal pooling rule with ``precise'' inputs from the others in order to form their revised opinion.  The process iterates until the individual opinions merge to a fixed point.  Because the pooling rules are convex, the fixed point is a constriction of the original set of opinions. 
Section \ref{data} studies constriction when evidence is collected and non-Bayesian updating rules are used to revise the agent's beliefs. For a countably additive probability, given a generic partition $\mathcal{E}$ of $\Omega$, conditioning does not allow constriction for all $E\in\mathcal{E}$. So the only way of obtaining constriction for all  $E\in\mathcal{E}$ is to intentionally forget the experiment associated with $\mathcal{E}$. But if we are able to make assumptions about the nature of $\underline{P}$, we can give conditions for constriction to take place for all $E\in\mathcal{E}$. Section \ref{concl} concludes our work. We study opportunities for constriction when we forego the assumption of countably additive probabilities in Appendix \ref{fin-add}, and we prove our results in Appendix \ref{proof}.


\section{Constricting without evidence}\label{no_data}
In this section, we study procedures that give the opportunity for constriction when no data are collected. 
\subsection{Coherent extension of a precise probability}\label{coh_ext}
Recall that, for de Finetti, a probability measure $P$ is coherent if for any finite collection $\{A_i\}_{i=1}^n$ of nonempty subsets of a state space $\Omega$ of interest, we have that $\sup_{\omega\in\Omega}\sum_{i=1}^n c_i [I_{A_i}(\omega)-P(A_i)] \geq 0$, for all $c_1,\ldots,c_n\in\mathbb{R}$, where $I_{A_i}$ denotes the indicator function for set $A_i$. De Finetti's Fundamental Theorem of Probability \cite[Section 3.10]{definetti1} is the following.
\begin{theorem}\label{def_coh1}
Call $\Omega$ the state space of interest. Given the probabilities $P(A_i)$ of a finite number of events $A_1,\ldots,A_n \subseteq \Omega$, the probability $P(A_{n+1})$ of a further event $A_{n+1}$ 
\begin{enumerate}
    \item either turns out to be determined if $A_{n+1}$ is linearly dependent on the $A_i$'s;
    \item or can be assigned, coherently, any value in a closed interval $[p^\prime,p^{\prime\prime}]$.
\end{enumerate}
More precisely, $p^\prime$ is the greatest lower bound (GLB) $\sup P(X)$ of the evaluations from below of the $P(X)$ given by the random quantities $X$ linearly dependent on the $A_i$'s for which we certainly have $X \leq A_{n+1}$.\footnote{This inequality has to be interpreted as $X(\omega) \leq I_{A_{n+1}}(\omega)$, for all $\omega\in\Omega$.} 
The same can be said for $p^{\prime\prime}$ (replacing $\sup$ by $\inf$, maximum by minimum, $A^\prime_{n+1}$ by $A^{\prime\prime}_{n+1}$, and changing the direction of the inequalities, etc. It is the least upper bound of evaluations from above).
\end{theorem}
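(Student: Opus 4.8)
The plan is to recast coherence as a convex-geometric condition and then read off the conclusion from the structure of a polytope together with linear-programming duality. First I would fix notation: write $v(\omega) = (I_{A_1}(\omega), \dots, I_{A_n}(\omega)) \in \{0,1\}^n$ and $p = (P(A_1), \dots, P(A_n))$. The defining inequality $\sup_{\omega} \sum_i c_i[I_{A_i}(\omega) - P(A_i)] \geq 0$ for all $c \in \mathbb{R}^n$ says exactly that no linear functional strictly separates $p$ from the finite set $V := \{v(\omega) : \omega \in \Omega\}$. Since $V \subseteq \{0,1\}^n$ is finite, its convex hull $\operatorname{conv}(V)$ is a compact polytope, and a standard separation argument gives the key reformulation: the assignment $P(A_1), \dots, P(A_n)$ is coherent if and only if $p \in \operatorname{conv}(V)$. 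I would prove this equivalence first, as it is the engine of everything that follows.

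Next, adjoin the further event. Let $w(\omega) = (v(\omega), I_{A_{n+1}}(\omega)) \in \{0,1\}^{n+1}$ and $W := \{w(\omega): \omega \in \Omega\}$, again a finite set with compact polytopal hull. By the same reformulation, extending the assignment by a value $t$ for $P(A_{n+1})$ is coherent precisely when $(p, t) \in \operatorname{conv}(W)$. Hence the set of admissible values is the fiber $F := \{t \in \mathbb{R} : (p,t) \in \operatorname{conv}(W)\}$, i.e. the intersection of the polytope $\operatorname{conv}(W)$ with the vertical line $\{p\} \times \mathbb{R}$. Since coherence of $P(A_1), \dots, P(A_n)$ means $p$ lies in the projection $\operatorname{conv}(V)$ of $\operatorname{conv}(W)$ onto the first $n$ coordinates, $F$ is nonempty; being the intersection of a compact convex set with a line, it is a compact interval $[p', p'']$ (possibly a single point). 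This yields the dichotomy: either $p' = p''$ and the value is determined, which is exactly the degenerate case where $I_{A_{n+1}}$ is an affine combination of $I_{A_1}, \dots, I_{A_n}$ over the atoms compatible with $p$, i.e. $A_{n+1}$ is linearly dependent on the $A_i$; or $p' < p''$ and every value in the closed interval $[p', p'']$ is coherent.

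It remains to identify the endpoints. The lower endpoint solves the linear program $\min\{t : (p,t) \in \operatorname{conv}(W)\}$; writing membership in $\operatorname{conv}(W)$ through convex weights $\lambda_j \geq 0$ on the finitely many atoms, with $\sum_j \lambda_j = 1$ and $\sum_j \lambda_j a^j = p$, the objective is $\sum_j \lambda_j b^j$, where $a^j, b^j$ record the values of the $I_{A_i}$ and of $I_{A_{n+1}}$ on atom $j$. Its dual, with multipliers $y \in \mathbb{R}^n$ and $y_0 \in \mathbb{R}$, is $\max \{ \langle y, p\rangle + y_0 : \langle y, a^j\rangle + y_0 \leq b^j \text{ for all } j\}$. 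The constraint $\langle y, a^j\rangle + y_0 \leq b^j$ for every atom is precisely $X \leq I_{A_{n+1}}$ for the random quantity $X := y_0 + \sum_i y_i I_{A_i}$ linearly dependent on the $A_i$, and the objective $\langle y, p\rangle + y_0$ is its forced prevision $P(X)$. As the primal is feasible (because $p$ is coherent) and bounded (the $b^j$ lie in $[0,1]$), strong duality gives $p' = \sup\{P(X) : X \text{ linearly dependent on the } A_i,\ X \leq I_{A_{n+1}}\}$, which is de Finetti's greatest lower bound. The companion statement for $p''$ follows by the symmetric argument with the inequalities reversed ($X \geq I_{A_{n+1}}$ and $\inf$ in place of $\sup$).

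The main obstacle is the first reduction, namely the equivalence between de Finetti's no-sure-loss inequality and membership of $p$ in $\operatorname{conv}(V)$. The forward direction is an application of the separating hyperplane theorem to the compact convex set $\operatorname{conv}(V)$ and the point $p$, but one must handle the supremum over $\omega$ carefully (it is in fact a maximum, since $V$ is finite) and rule out the strictly-separated case; the converse is a direct computation writing $p$ as a convex combination of the $v(\omega)$. Everything after this reduction is routine polytope geometry and LP duality, with the only care being to ensure that the optima are attained, which holds by compactness, so that the bounding interval is genuinely closed.
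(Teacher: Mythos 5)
The paper does not prove this statement: Theorem \ref{def_coh1} is de Finetti's Fundamental Theorem of Probability, quoted verbatim from \cite[Section 3.10]{definetti1} and used as an imported result (the paper's Appendix \ref{proof} proves Theorem \ref{def_contr} and later results, but never this one). So there is no in-paper argument to compare against; what matters is whether your self-contained derivation is sound, and it is. Your route --- reformulating coherence as $p\in\operatorname{conv}(V)$ via the separating hyperplane theorem, identifying the admissible extensions with the fiber of the polytope $\operatorname{conv}(W)$ over $p$ (a nonempty compact interval, since the projection of $\operatorname{conv}(W)$ is $\operatorname{conv}(V)$), and then reading off the endpoints by LP duality --- is the standard modern proof of the Fundamental Theorem of Prevision, and each step is correctly handled: $V,W\subseteq\{0,1\}^{n}$ and $\{0,1\}^{n+1}$ are finite even when $\Omega$ is not, so the polytopes are compact and the LP is finite; feasibility plus boundedness gives strong duality and attainment, so the interval is genuinely closed and the GLB/LUB are achieved. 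Two small points deserve care. First, your dual variable $y_0$ makes $X$ an \emph{affine} combination $y_0+\sum_i y_i I_{A_i}$; this matches de Finetti's usage only because his notion of linear dependence implicitly includes the sure event among the $A_i$'s, which you should state explicitly. Second, in case (1) you assert that $p'=p''$ is ``exactly'' the case of linear dependence; de Finetti only claims the forward implication (dependence $\Rightarrow$ determined), and the converse holds only relative to the atoms carrying positive weight in some representation of $p$, as your own hedge ``over the atoms compatible with $p$'' suggests --- it would be cleaner to prove only the direction the theorem actually asserts. Neither point is a gap in substance.
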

Notice that $[p^\prime,p^{\prime\prime}]$ can be an illusory restriction, for example if $p^\prime=0$ and $p^{\prime\prime}=1$. The interpretation to this result is the following. Suppose we express our subjective beliefs around events $A_1,\ldots,A_n$ via a precise probability distribution $P$. The fact that $P$ is precise is a crucial tenet of de Finetti's subjective probability theory. Then, if we want to coherently extend our beliefs to a new event $A_{n+1}$ of interest, either we can do that ``for free'' if $A_{n+1}$ is a linear combination of the other events, or we have an interval $[p^\prime,p^{\prime\prime}]$ within which to select the value to assign to $P(A_{n+1})$. De Finetti himself does not say specifically how to choose a value within $[p^\prime,p^{\prime\prime}]$. The takeaway seems to be along the lines of ``you should be able to think hard enough to come up with a precise number $p\in[p^\prime,p^{\prime\prime}]$ to attach to $P(A_{n+1})$''. 

Denote by $\text{\cjRL{no}}=\text{deFin}$ the procedure of choosing any value in $[p^\prime,p^{\prime\prime}]$ to assign to the probability of event $A_{n+1}$. Then, the following holds.
\begin{theorem}\label{def_contr}
Suppose -- in the notation of Theorem \ref{def_coh1} -- that $p^\prime \neq p^{\prime\prime}$. Then, $\text{deFin} \looparrowleft A_{n+1}$ if $P^\text{deFin}(A_{n+1})\in(p^\prime,p^{\prime\prime})$; the constriction is weak if $P^\text{deFin}(A_{n+1})\in\{p^\prime,p^{\prime\prime}\}$.
\end{theorem}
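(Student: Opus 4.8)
The plan is to identify precisely the ``before'' and ``after'' probability intervals for $A_{n+1}$ and then read off the conclusion directly from Definition \ref{contr_def}. The key conceptual observation is that specifying a precise $P$ only on the events $A_1,\ldots,A_n$ does not pin down $P(A_{n+1})$; rather, it implicitly determines the set $\mathcal{P}$ of all coherent (precise) extensions of the assessment to a probability on the algebra generated by $A_1,\ldots,A_n,A_{n+1}$. This is what makes the pre-procedure assessment of $A_{n+1}$ genuinely imprecise, and hence what makes constriction meaningful here.

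First I would invoke Theorem \ref{def_coh1} to compute the pre-procedure bounds. Since $p' \neq p''$, event $A_{n+1}$ is not linearly dependent on the $A_i$'s, so case (2) of the theorem applies, and the set of attainable values $\{Q(A_{n+1}) : Q \in \mathcal{P}\}$ is exactly the closed interval $[p',p'']$, with both endpoints attained. Consequently $\underline{P}(A_{n+1}) = \inf_{Q\in\mathcal{P}} Q(A_{n+1}) = p'$ and $\overline{P}(A_{n+1}) = \sup_{Q\in\mathcal{P}} Q(A_{n+1}) = p''$.

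Second I would describe the post-procedure set. Procedure $\text{deFin}$ commits to a single value $p := P^\text{deFin}(A_{n+1}) \in [p',p'']$. The resulting set $\mathcal{P}^\text{deFin}$ consists of all coherent extensions that additionally satisfy $Q(A_{n+1}) = p$; for every such $Q$ one has $Q(A_{n+1}) = p$, whence $\underline{P}^\text{deFin}(A_{n+1}) = \overline{P}^\text{deFin}(A_{n+1}) = p$. Comparison via Definition \ref{contr_def} is then immediate: if $p \in (p',p'')$ then $\underline{P}^\text{deFin}(A_{n+1}) = p > p' = \underline{P}(A_{n+1})$ and $\overline{P}^\text{deFin}(A_{n+1}) = p < p'' = \overline{P}(A_{n+1})$, giving strict uniform constriction $\text{deFin} \looparrowleft A_{n+1}$. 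If $p = p'$, the lower bound is unchanged while the upper bound strictly decreases (since $p' < p''$), so exactly one inequality is weak and the constriction is weak; the case $p = p''$ is symmetric.

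The main obstacle here is conceptual rather than technical: one must justify reading the de Finetti interval $[p',p'']$ as the lower/upper probability interval of $A_{n+1}$ for the implicitly specified set $\mathcal{P}$ of coherent extensions, and confirm that both endpoints are attained (which is precisely what the closedness asserted in Theorem \ref{def_coh1} provides). Once that identification is in place, collapsing the interval to the point $\{p\}$ under $\text{deFin}$ and invoking Definition \ref{contr_def} requires no further computation.
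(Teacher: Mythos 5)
Your proposal is correct and follows exactly the route the paper intends: the paper's own proof is simply ``Immediate from Definition \ref{contr_def} and Theorem \ref{def_coh1},'' and your argument fills in precisely those details (identifying $[p',p'']$ with $[\underline{P}(A_{n+1}),\overline{P}(A_{n+1})]$ via the coherent extensions, collapsing to the singleton $\{p\}$ after $\text{deFin}$, and comparing). No gaps.
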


Recall that, for Walley, a lower probability measure $\underline{P}$ is coherent if for any finite collection $\{A_i\}_{i=0}^n$ of nonempty subsets of a state space $\Omega$ of interest, we have that
$$\sup_{\omega\in\Omega}\left[ \sum_{i=1}^n   \left(I_{A_i}(\omega)-\underline{P}(A_i) \right)-s \left(I_{A_0}(\omega)-\underline{P}(A_0) \right) \right] \geq 0,$$
for all $s,n\in\mathbb{Z}_+$. In \cite[Section 3.1]{walley} the author gives the imprecise probabilities (IP) counterpart of Theorem \ref{def_coh1}. That is, Walley presents a method to extend coherently lower and upper probabilities $\underline{P}(A_i)$, $\overline{P}(A_i)$ from a finite collection of sets $\{A_1,\ldots,A_n\}\subseteq 2^\Omega$ to any other $A_{n+1} \subseteq \Omega$. However, this result is not intended to prompt constriction, in contrast with de Finetti's Fundamental Theorem. The central idea in IP theory is to be ``comfortable'' working with sets of probabilities, and not being forced to select a precise value inside the set.


Notice that Theorems \ref{def_coh1} and \ref{def_contr}, and the results in \cite[Section 3.1]{walley}, are given in de Finetti’s and Walley’s frameworks, respectively. They both rely on the finitely additive probabilities. Because we consider finitely many events, though, this distinction is immaterial. That being said, it is important to point out that de Finetti’s and Walley’s extensions procedures apply also starting from an arbitrary (possibly infinite) set of events, where the distinction between finite and countable additivity matters, see for example Appendix \ref{fin-add}.

Let $\#$ denote the cardinality operator, $\text{Conv}(H)$ the convex hull of a generic set $H$, and $\text{ex}[K]$ the extreme points of a generic convex set $K$.
We can generalize Theorem \ref{def_contr} to the following.
\begin{theorem}\label{generic-no-data}
Suppose a generic procedure $\text{\cjRL{no}}$ generates a set $\mathcal{P} \subseteq \Delta(\Omega,\mathcal{F})$ of probabilities on $(\Omega,\mathcal{F})$ such that $\#\mathcal{P}\geq 2$, and then prescribes a way of selecting one element $P^\star=\underline{P}^\text{\cjRL{no}}=\overline{P}^\text{\cjRL{no}}$ from $\text{Conv}(\mathcal{P})$. Assume that $\emptyset \neq \text{ex}[\text{Conv}(\mathcal{P})]=\{P^{ex}_j\}_{j\in\mathcal{J}}$, where $\mathcal{J}$ is a generic index set. 
Then, we have that
\begin{itemize}
    \item if $P^\star\in\text{ex}[\text{Conv}(\mathcal{P})]$, then 
    there may exist a collection $\{\tilde{A}\}\subseteq \mathcal{F}$ for which $\text{\cjRL{no}}$ weakly constricts $\tilde{A}$. In addition, $\text{\cjRL{no}} \looparrowleft A$, for all $A\in\mathcal{F}\setminus\{\tilde{A}\}$;
    \item if instead $P^\star=\sum_{j\in\mathcal{J}} \alpha_j P^{ex}_j$, $\alpha_j>0$ for all $j$, 
    then $\text{\cjRL{no}} \looparrowleft A$, for all $A\in\mathcal{F}$.
\end{itemize}
\end{theorem}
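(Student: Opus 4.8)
The plan is to exploit the fact that the procedure $\text{\cjRL{no}}$ collapses the agent's beliefs to a \emph{single} measure $P^\star$, so that $\underline{P}^{\text{\cjRL{no}}}(A)=\overline{P}^{\text{\cjRL{no}}}(A)=P^\star(A)$ for every $A\in\mathcal{F}$. Constriction of $A$ then reduces entirely to the position of the number $P^\star(A)$ inside the interval $[\underline{P}(A),\overline{P}(A)]$. The first step is to record the universal sandwich $\underline{P}(A)\le P^\star(A)\le\overline{P}(A)$: since $A\mapsto P(A)$ is affine in $P$, one has $\inf_{P\in\mathcal{P}}P(A)=\inf_{P\in\mathrm{Conv}(\mathcal{P})}P(A)=\underline{P}(A)$ and likewise for the supremum, and $P^\star\in\mathrm{Conv}(\mathcal{P})$ by hypothesis. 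Reading Definition~\ref{contr_def} through this lens, $\text{\cjRL{no}}$ strictly constricts $A$ iff both inequalities in the sandwich are strict, while it weakly constricts $A$ iff exactly one of them is an equality (the remaining one being forced strict whenever $\underline{P}(A)<\overline{P}(A)$).

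For the first bullet, where $P^\star=P^{ex}_k$ is an extreme point, I would set $\{\tilde{A}\}:=\{A\in\mathcal{F}:\underline{P}(A)<\overline{P}(A)\text{ and }P^\star(A)\in\{\underline{P}(A),\overline{P}(A)\}\}$. For $A$ outside this collection and with nondegenerate interval, $P^\star(A)$ lies strictly between the endpoints, so both inequalities are strict and $\text{\cjRL{no}}\looparrowleft A$; for $A\in\{\tilde{A}\}$ one endpoint is attained while the other is not (the interval being nondegenerate), so exactly one inequality is weak and $\text{\cjRL{no}}$ weakly constricts $A$. The hedge ``may exist'' is then explained geometrically: $\{\tilde{A}\}$ is nonempty precisely when some event functional $P\mapsto P(A)$ is extremized at the extreme point $P^\star$, which a supporting hyperplane at $P^\star$ may or may not realize through a functional of this special form.

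The substantive case is the second bullet, $P^\star=\sum_{j\in\mathcal{J}}\alpha_j P^{ex}_j$ with all $\alpha_j>0$. Here I would argue that no endpoint can be attained unless the event is already determinate. Suppose $P^\star(A)=\overline{P}(A)$. Since each $P^{ex}_j(A)\le\overline{P}(A)$ and $\sum_j\alpha_j=1$, the equality $\sum_j\alpha_j P^{ex}_j(A)=\overline{P}(A)$ together with $\alpha_j>0$ for all $j$ forces $P^{ex}_j(A)=\overline{P}(A)$ for every $j$. By the Krein--Milman/Bauer representation, every member of $\mathrm{Conv}(\mathcal{P})$ is a (limit of) convex combinations of the $P^{ex}_j$, so all of $\mathcal{P}$ assigns $A$ the value $\overline{P}(A)$, whence $\underline{P}(A)=\overline{P}(A)$. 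The symmetric computation handles $P^\star(A)=\underline{P}(A)$. Contrapositively, for every event with $\underline{P}(A)<\overline{P}(A)$ both sandwich inequalities are strict, giving $\text{\cjRL{no}}\looparrowleft A$.

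The main obstacle I anticipate is this representation step: passing from ``all extreme points agree on $A$'' to ``all of $\mathcal{P}$ agrees on $A$'' requires that $\mathrm{Conv}(\mathcal{P})$ be recovered from its extreme points, i.e.\ an appeal to the Krein--Milman theorem (and Bauer's maximum principle to identify $\underline{P}(A)=\inf_j P^{ex}_j(A)$ and $\overline{P}(A)=\sup_j P^{ex}_j(A)$), which needs compactness of $\mathrm{Conv}(\mathcal{P})$ or working with its closed convex hull. A minor caveat to state explicitly is that ``for all $A\in\mathcal{F}$'' must be read modulo determinate events (in particular $\emptyset$ and $\Omega$, and any $A$ on which all $P\in\mathcal{P}$ coincide): for these $\underline{P}(A)=\overline{P}(A)$ and no constriction---strict or weak---is possible, so the claim is vacuous there.
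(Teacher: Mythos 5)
Your proposal is correct and follows the same route as the paper's proof: both arguments reduce constriction to the position of the single number $P^\star(A)$ inside $[\underline{P}(A),\overline{P}(A)]$, and both rest on the identity between the bounds computed over $\mathcal{P}$, over $\mathrm{Conv}(\mathcal{P})$, and over $\mathrm{ex}[\mathrm{Conv}(\mathcal{P})]$. Where you go beyond the paper is in the second bullet: the paper simply asserts that a strict convex combination of the extreme points satisfies $P^\star(A)\in(\underline{P}(A),\overline{P}(A))$ for all $A$, whereas you actually derive it by showing that $P^\star(A)=\overline{P}(A)$ with all $\alpha_j>0$ forces every $P^{ex}_j(A)=\overline{P}(A)$ and hence $\underline{P}(A)=\overline{P}(A)$. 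The two caveats you flag are both real and both silently elided in the paper: the representation of $\mathrm{Conv}(\mathcal{P})$ by its extreme points (and the equality $\overline{P}(A)=\sup_j P^{ex}_j(A)$) needs a Krein--Milman/Bauer-type hypothesis such as compactness, which the theorem does not state; and the conclusion ``$\text{\cjRL{no}}\looparrowleft A$ for all $A\in\mathcal{F}$'' is literally false for determinate events such as $\emptyset$ and $\Omega$, where $\underline{P}(A)=\overline{P}(A)$ and strict constriction is impossible, so the statement must be read modulo such events. Your explicit construction of the exceptional collection $\{\tilde{A}\}$ in the first bullet also sharpens the paper's vaguer ``there may exist'' phrasing.
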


We can also give a topological version of Theorem \ref{generic-no-data}; call $\partial_X H$ and $\text{int}_X H$ the boundary and the interior of a generic set $H$ in $X$, respectively. 

\begin{theorem}\label{generic-no-data-cor}
Endow $[0,1]$ with the Euclidean topology, and call $\mathcal{B}([0,1])$ the Borel sigma-algebra on $[0,1]$. Fix a generic $A\in\mathcal{F}$, and assume that $\mathcal{P}(A):=\{P(A): P\in\mathcal{P}\} \subseteq \mathcal{B}([0,1])$ and that $\#\mathcal{P}(A)\geq 2$.  Then, 
\begin{itemize}
  \item if $\mathcal{P}(A)$ is closed in the Euclidean topology and $P^\star(A)\in\partial_{\mathcal{B}([0,1])}\mathcal{P}(A)$, then $\text{\cjRL{no}}$ weakly constricts ${A}$;
\item if instead $P^\star(A)\in\text{int}_{\mathcal{B}([0,1])}\mathcal{P}(A)$, then $\text{\cjRL{no}} \looparrowleft A$.
\end{itemize}
\end{theorem}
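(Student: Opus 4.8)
The plan is to exploit that the procedure $\text{\cjRL{no}}$ collapses the agent's beliefs onto the single measure $P^\star$, so the post-procedure credal set is the singleton $\{P^\star\}$ and therefore $\underline{P}^{\text{\cjRL{no}}}(A)=\overline{P}^{\text{\cjRL{no}}}(A)=P^\star(A)$. Both flavours of constriction then reduce to locating the scalar $P^\star(A)$ relative to $\underline{P}(A)=\inf\mathcal{P}(A)$ and $\overline{P}(A)=\sup\mathcal{P}(A)$. Two preliminary facts anchor the argument. First, $\#\mathcal{P}(A)\geq 2$ forces $\underline{P}(A)<\overline{P}(A)$. Second, since $P^\star\in\text{Conv}(\mathcal{P})$, evaluating at $A$ gives $P^\star(A)\in\text{Conv}(\mathcal{P}(A))$ (convex hull taken in $\mathbb{R}$); as $\text{Conv}(\mathcal{P}(A))$ is the smallest interval containing $\mathcal{P}(A)$, with endpoints $\inf\mathcal{P}(A)$ and $\sup\mathcal{P}(A)$, we obtain $P^\star(A)\in[\underline{P}(A),\overline{P}(A)]$ in every case. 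This is the one-dimensional shadow of the extreme-point bookkeeping behind Theorem \ref{generic-no-data}, and it guarantees that the selected value never escapes the original probability interval.

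For the interior case I would argue that $P^\star(A)\in\text{int}_{\mathcal{B}([0,1])}\mathcal{P}(A)$ supplies an $\varepsilon>0$ with $(P^\star(A)-\varepsilon,\,P^\star(A)+\varepsilon)\subseteq\mathcal{P}(A)$, placing genuine members of $\mathcal{P}(A)$ strictly on either side of $P^\star(A)$. Consequently $\underline{P}(A)\leq P^\star(A)-\varepsilon<P^\star(A)$ and $P^\star(A)<P^\star(A)+\varepsilon\leq\overline{P}(A)$, that is $\underline{P}^{\text{\cjRL{no}}}(A)>\underline{P}(A)$ and $\overline{P}^{\text{\cjRL{no}}}(A)<\overline{P}(A)$, which is exactly $\text{\cjRL{no}}\looparrowleft A$.

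For the boundary case the closedness hypothesis carries the weight: it forces $\inf$ and $\sup$ to be attained, so $\underline{P}(A),\overline{P}(A)\in\mathcal{P}(A)$, and it forces $\partial\mathcal{P}(A)\subseteq\mathcal{P}(A)$, so the selected boundary value satisfies $P^\star(A)\in\mathcal{P}(A)$ and hence $\underline{P}(A)\leq P^\star(A)\leq\overline{P}(A)$. Because $\underline{P}(A)<\overline{P}(A)$, the value $P^\star(A)$ can coincide with at most one of the two endpoints, so at least one of these inequalities is strict while the other is at worst an equality. By Definition \ref{contr_def} this is precisely weak constriction of $A$ (and it is in fact strict whenever the boundary value falls in the interior of the hosting interval, consistently with the interior case).

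The step that needs the most care is the interior case, specifically the interaction between the topology and the endpoints $0$ and $1$: a point of $\mathcal{P}(A)$ interior in the subspace $[0,1]$ but equal to $0$ or $1$ would only have members of $\mathcal{P}(A)$ on one side, yielding weak rather than strict constriction. I would therefore read the interior in the second bullet as the two-sided (real-line) interior---equivalently, restrict to interior points lying in the open interval $(0,1)$---so that an interior value of $P^\star(A)$ genuinely has elements of $\mathcal{P}(A)$ strictly below and above it; this is the hinge that upgrades the membership $P^\star(A)\in[\underline{P}(A),\overline{P}(A)]$ into the two strict inequalities, and I would state it explicitly rather than absorb it into the topology.
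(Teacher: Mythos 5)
Your proof is correct and follows essentially the same route as the paper's: identify $\underline{P}^{\text{\cjRL{no}}}(A)=\overline{P}^{\text{\cjRL{no}}}(A)=P^\star(A)$ and locate that scalar relative to the interval $[\underline{P}(A),\overline{P}(A)]$, with the interior hypothesis yielding the two strict inequalities and the boundary hypothesis yielding at most one equality. The paper's own proof is a two-line assertion of exactly these conclusions, so your write-up mainly supplies the justifications it leaves implicit. Two of your additions are genuinely substantive rather than cosmetic: in the boundary case the paper simply asserts that $P^\star(A)\in\partial\mathcal{P}(A)$ forces $P^\star(A)\in\{\underline{P}(A),\overline{P}(A)\}$, which is only true when $\mathcal{P}(A)$ is an interval (for, say, $\mathcal{P}(A)=\{0.3,0.5,0.7\}$ every point is a boundary point), whereas your argument via closedness and $\underline{P}(A)<\overline{P}(A)$ covers the general closed set; and your observation about the subspace topology of $[0,1]$ at the endpoints $0$ and $1$ identifies a real degenerate case in which the paper's claim that interior membership implies $P^\star(A)\in(\underline{P}(A),\overline{P}(A))$ can fail, so reading the interior as the real-line interior (or excluding $P^\star(A)\in\{0,1\}$) is the right fix.
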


\begin{remark}\label{rem_topology}
    Notice that 
    the assumption that $\mathcal{P}(A) \subseteq \mathcal{B}([0,1])$ is verified in the case that $\mathcal{P}$ is convex.
\end{remark}

It is immediate to see how Theorem \ref{def_contr} is a special case of Theorems \ref{generic-no-data} and \ref{generic-no-data-cor}. Another procedure that fits the requirement of Theorems \ref{generic-no-data} and \ref{generic-no-data-cor} is \textit{Halmos' extension} \cite[Exercise 48.4]{halmos}, \cite[Section 4.13]{billingsley}. Consider two generic measurable spaces $(X,\mathcal{X})$ and $(Y,\mathcal{Y})$. Let $\mu\in\Delta(X,\mathcal{X})$ and, for all $x\in X$, $\nu_x\in\Delta(Y,\mathcal{Y})$. Suppose further that for all $B\in\mathcal{Y}$, $\nu_{\bullet}(B):X \rightarrow [0,1]$ is $\mathcal{X}$-measurable. Then,
\begin{itemize}
    \item[(i)] map $E\mapsto \nu_x(\{y\in Y : (x,y)\in E\})$ is $\mathcal{X}$-measurable, for all $E\in\mathcal{X}\times\mathcal{Y}$;
    \item[(ii)] map $\pi:\mathcal{X}\times\mathcal{Y} \rightarrow [0,1]$,
    $$E \mapsto \pi(E):=\int_X \nu_x \left(\{y\in Y : (x,y)\in E\}) \mu(\text{d}x\right)$$
    is a probability measure on $\mathcal{X}\times\mathcal{Y}$.
\end{itemize}
Suppose now that there exists a set $A\subseteq Y$ such that its inner and outer measures do not coincide, that is, for all $x\in X$, ${\nu_\star}_x(A) \neq \nu^\star_x(A)$. Then, consider $A^\prime=X\times A$. We have that
$$\pi_{A^\prime}\equiv \pi(A^\prime)=\int_X \nu_x \left(\{y\in Y : (x,y)\in A^\prime\}) \mu(\text{d}x\right),$$
and $\pi_{A^\prime} \in [{\nu_\star}_x(A),\nu^\star_x(A)]$. So Halmos' extension prescribes a way to extend a countable additive probability measure on $\mathcal{Y}$ to another countably additive probability measure on $\mathcal{X}\times\mathcal{Y}$ that gives a well defined measure to a $\mathcal{Y}$-non-measurable set $A$. This value belongs to the interval whose endpoints are the inner and outer measures of $A$, respectively. It is immediate to see, then, how Halmos' extension satisfies the conditions of Theorems \ref{generic-no-data} and \ref{generic-no-data-cor}.

\begin{remark}\label{difference_defin_halmos}
Before going on, we need to mention a noteworthy difference between $\text{\cjRL{no}}=\text{deFin}$ and extension theorems from measure theory (à la Halmos). The relevant contrast is that for the Fundamental Theorem (applied to probability), de Finetti uses as his domains linear spans of, e.g. indicator functions. And for the measure theorists, the extension of probabilities to a larger ring of sets uses (countable) sums of indicators defined: in the finite case from an algebra, and in the infinite case from a sigma-algebra. Two additional meaningful differences are (1) for finite structures, de Finetti does not require that probabilities are defined over an algebra, whereas, the others do; (2) for infinite structures, as de Finetti does not require countable additivity, his  inner and outer approximations are by finite sums of indicators. By contrast, the measure theorists require countably additive probabilities, and so they use countable sums for constructing inner and outer measure approximations.
\end{remark}

We now briefly present three procedures that fit the requirements of Theorems \ref{generic-no-data} and \ref{generic-no-data-cor}:
\begin{itemize}
    \item[(i)] \textit{convex pooling} \cite[Section 2]{ojea}, in which the opinions of $k$ agents (expressed via precise probabilities $P_j$, $j\in\{1,\ldots,k\}$) are first pooled in a convex way, thus forming a set
    \begin{align*}
        \mathcal{P}=\left\lbrace{P\in\Delta(\Omega,\mathcal{F}) : P=\sum_{j=1}^k \zeta_j P_j }\right\rbrace,
    \end{align*}
    where $\zeta_j\geq 0$, for all $j \in\{1,\ldots,k\}$ and $\sum_{j=1}^k\zeta_j=1$, from which a unique pooled opinion $P^\star$ is selected;
    \item[(ii)] \textit{Jaynes' MaxEnt} \cite{jaynes}, in which, given a set of constraints $\mathbf{C}$, the set of probabilities of interest to the researcher is $\mathcal{P}=\{P\in\Delta(\Omega,\mathcal{F}) : P \text{ satisfies } \mathbf{C}\}$, and $P^\star$ is selected by maximizing the Shannon entropy in $\mathcal{P}$;
    \item[(iii)] \textit{generalized fiducial inference} (GFI) \cite{hannig}, in which a set of data-dependent measures on the parameter space $\Omega$ -- called generalized fiducial distributions -- is defined by carefully inverting a deterministic data-generating equation without the use of Bayes’ theorem. Mathematically, we can write $\mathcal{P}=\{P\in\Delta(\Omega,\mathcal{F}) : P \text{ satisfies \cite[Equation (2)]{hannig}} \}$. As pointed out in \cite[Remark 4]{hannig}, $P^\star$ is then selected by choosing the appropriate norm to endow the sample space. In \cite[Section 1]{hannig}, the authors point out how, while GFI is different philosophically from Dempster-Shafer theory \cite{dsc} and inferential models \cite{martin}, the resulting solutions of these three methods are often mathematically closely related to one another.


\end{itemize}

\section{Constricting based on convex pooling}\label{hybrid}
As an example of a procedure that allows agents to collect new evidence, but does not 
use conditioning to update an agent's beliefs, we present the famous model in \cite{degroot}. There, the author supposes that there are $k$ individuals, each having their own subjective probability distribution $F_i$ for the unknown value of some parameter $\omega\in\Omega$.\footnote{Usually the elements of the parameter space $\Theta$ are denoted by $\theta$, while the elements of the state space $\Omega$ by $\omega$. Since the focus of DeGroot's model is the parameter space only, we used -- just in this section -- the $\omega\in\Omega$ notation for the parameter space to maintain the notation consistent with other sections.} For agent $i$, the opinions of all the other $k-1$ agents represent new evidence. Instead of conditioning on those, agent $i$ pools their own opinion with that of the other agents. DeGroot shows that, repeating this process for all agent $i$, the group reaches (asymptotically) an agreement on a common subjective probability distribution. After updating their opinions, the  probability distribution for every member of the group belongs to the set
\begin{equation}\label{eq_cvx_pool}
    \mathcal{P}=\left\lbrace{F=\sum_{j=1}^k \zeta_{j}F_{j}}\right\rbrace,
\end{equation}
where $\zeta_{j}\in[0,1]$, for all $j\in\{1,\ldots,k\}$, and $\sum_{j=1}^k \zeta_{j}=1$. In particular, for all $i\in\{1,\ldots,k\}$, we write that after the first (pooling) iteration, the updated probability measure for agent $i$, denoted by $F_{i1}$ is given by
$F_{i1}=\sum_{j=1}^k p_{ij}F_{j}$. This means that individual $i$ weighs the opinion of all the agents, including themselves, via coefficients $p_{i1},\ldots,p_{ik}$ representing the relative importance that agent $i$ assigns to the opinion of the other members of the group. Because this is true for all agents, we can give a linear algebra notation to the updating process. Call $\mathbf{P}$ the $k\times k$ stochastic matrix whose rows are given by probability vectors $(p_{i1},\ldots,p_{ik})$, $i\in\{1,\ldots,k\}$. Call then $\mathbf{F}=(F_1,\ldots,F_k)^\top$; we have that $\mathbf{F}^{(1)}=\mathbf{P}\mathbf{F}$, where $\mathbf{F}^{(1)}:=(F_{11},\ldots,F_{k1})^\top$. Of course this holds for all iterations, so in turn we have that $\mathbf{F}^{(n)}=\mathbf{P}\mathbf{F}^{(n-1)}=\mathbf{P}^n\mathbf{F}$, for all $n\in\mathbb{N}$. The members continue to make these revisions indefinitely or until $\mathbf{F}^{(n)}=\mathbf{F}^{(n-1)}$, for all $n\geq N$, for some $N\in\mathbb{N}$, so further revisions would not change the opinions of the members. The following is the main result of \cite{degroot}.
\begin{theorem}\label{degr_cons}
If there exists $n\in\mathbb{N}$ such that every element in at least one column of $\mathbf{P}^n$ is positive, then a consensus is reached.
\end{theorem}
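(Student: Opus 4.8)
The plan is to reduce the matrix statement $\mathbf{F}^{(m)}=\mathbf{P}^m\mathbf{F}$ to a scalar contraction and then show that the positive-column hypothesis forces a uniform shrinking of the disagreement among the agents. Since consensus is a statement about each agent's probability distribution, it suffices to fix an arbitrary event $A\in\mathcal{F}$ and argue coordinatewise: set $x=(F_1(A),\ldots,F_k(A))^\top\in[0,1]^k$, so that the opinions on $A$ after $m$ pooling rounds are the components of $\mathbf{P}^m x$. Because $\mathbf{P}$ is stochastic, every component of $\mathbf{P}^m x$ is a convex combination of $x_1,\ldots,x_k$; hence $M_m:=\max_i(\mathbf{P}^m x)_i$ is non-increasing and $m_m:=\min_i(\mathbf{P}^m x)_i$ is non-decreasing, and both converge by monotone boundedness. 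Consensus on $A$ amounts precisely to $M_m-m_m\to 0$, i.e. to the two limits coinciding.

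The heart of the argument is an $n$-step contraction of the spread $M-m$. Let $n$ be the exponent furnished by the hypothesis, and let $j_0$ be a column of $\mathbf{P}^n$ all of whose entries are at least some $\varepsilon>0$. First I would record the elementary identity, for any two rows $i,i'$ of a stochastic matrix $\mathbf{Q}=(q_{ij})$,
\[
\tfrac{1}{2}\sum_j |q_{ij}-q_{i'j}| = 1-\sum_j \min(q_{ij},q_{i'j}),
\]
and then bound, for $z=\mathbf{P}^n y$ where $y$ has spread $M-m$,
\[
z_i-z_{i'}=\sum_j\big(p^{(n)}_{ij}-p^{(n)}_{i'j}\big)\,y_j\le (M-m)\,\tfrac{1}{2}\sum_j\big|p^{(n)}_{ij}-p^{(n)}_{i'j}\big|,
\]
by replacing $y_j$ with $M$ where the coefficient is positive and with $m$ where it is negative. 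The positive column $j_0$ gives $\sum_j\min(p^{(n)}_{ij},p^{(n)}_{i'j})\ge\varepsilon$ for every pair $i,i'$, so the Dobrushin-type coefficient $\tau:=\max_{i,i'}\tfrac{1}{2}\sum_j|p^{(n)}_{ij}-p^{(n)}_{i'j}|$ satisfies $\tau\le 1-\varepsilon<1$. Choosing $i,i'$ to attain the maximum and minimum of $z$ yields the contraction $\max_i z_i-\min_i z_i\le\tau(M-m)$: the spread after $n$ steps is at most $\tau$ times the spread of $y$.

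With this in hand I would iterate in blocks of $n$ steps: applying the contraction to $\mathbf{P}^n,\mathbf{P}^{2n},\ldots$ gives $M_{\ell n}-m_{\ell n}\le\tau^\ell(M_0-m_0)\to 0$, and monotonicity of $M_m$ and $m_m$ squeezes the intermediate indices so that $M_m-m_m\to 0$ along the full sequence. Hence for every event $A$ all agents' opinions converge to a common value; applying this with $x=e_j$ for each standard basis vector shows that each column of $\mathbf{P}^m$ tends to a constant vector, i.e. $\mathbf{P}^m\to\mathbf{1}\pi^\top$ for some probability vector $\pi$, so $\mathbf{F}^{(m)}$ converges with all components equal to $\sum_j\pi_j F_j$ -- a consensus. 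The main obstacle is the contraction step: one must verify that the positive-\emph{column} hypothesis (rather than the more familiar requirement that all of $\mathbf{P}^n$ be strictly positive) still forces the Dobrushin coefficient strictly below $1$, which is exactly why a single shared reachable state $j_0$ suffices, and then carefully organize the block iteration together with the squeeze on the off-block indices.
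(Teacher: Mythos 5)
Your argument is correct. The key steps all check out: the identity $\tfrac{1}{2}\sum_j|q_{ij}-q_{i'j}|=1-\sum_j\min(q_{ij},q_{i'j})$, the bound on $z_i-z_{i'}$ obtained by splitting the coefficients by sign, the observation that a column bounded below by $\varepsilon$ forces $\sum_j\min(p^{(n)}_{ij},p^{(n)}_{i'j})\ge\varepsilon$ and hence a Dobrushin coefficient $\tau\le 1-\varepsilon<1$, and the block iteration with the monotone squeeze on intermediate indices. Applying the contraction to the standard basis vectors to get $\mathbf{P}^m\to\mathbf{1}\pi^\top$, so that $\mathbf{F}^{(m)}\to\mathbf{1}(\pi^\top\mathbf{F})$ with all components equal to $\sum_j\pi_jF_j$, correctly delivers the consensus distribution $F^\star$, which is a probability distribution as a convex combination of the $F_j$. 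Be aware, though, that the paper does not prove this theorem at all: it is stated as the main result of DeGroot's 1974 paper and simply cited, and DeGroot's own argument proceeds by appealing to standard limit theorems for finite Markov chains (the positive-column condition guarantees a single aperiodic recurrent class, hence convergence of $\mathbf{P}^n$ to a rank-one stochastic matrix whose common row is the unique stationary vector $\boldsymbol{\pi}$ with $\boldsymbol{\pi}\mathbf{P}=\boldsymbol{\pi}$). Your route via the Dobrushin ergodic coefficient is a genuinely different, self-contained, and more quantitative proof: it buys an explicit geometric rate $\tau^{\lfloor m/n\rfloor}$ for the shrinking of the spread and avoids invoking the recurrence/aperiodicity classification, at the cost of not directly exhibiting $\pi$ as the stationary vector (you recover it only as the limit of the columns). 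One small point of presentation: the monotonicity $M_{m+1}\le M_m$ follows from writing $\mathbf{P}^{m+1}x=\mathbf{P}(\mathbf{P}^mx)$, i.e.\ each new coordinate is a convex combination of the \emph{current} coordinates, not of the original $x_1,\ldots,x_k$; your phrasing elides this, but the intended argument is clear and sound.
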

That is, if the condition in Theorem \ref{degr_cons} is satisfied, then there exists a $k\times 1$ dimensional vector $\boldsymbol{\pi}=(\pi_1,\ldots,\pi_k)$ (that is unique, as guaranteed by \cite[Theorem 3]{degroot}) whose elements are non-negative and sum up to $1$, and such that $\boldsymbol{\pi}\mathbf{P}=\boldsymbol{\pi}$. In turn, this entails that if we call $\boldsymbol{\Pi}$ the $k\times k$ stochastic matrix whose rows are all the same and equal to $\boldsymbol{\pi}$, we have that $\mathbf{F}^\star=\boldsymbol{\Pi}\mathbf{F}$, where $\mathbf{F}^\star=(F_1^\star,\ldots,F_k^\star)^\top$ such that $F^\star=F_1^\star=\cdots F_k^\star=\sum_{j=1}^k \pi_j F_j$, where $F^\star$ is the common subjective distribution that is reached in the consensus. Notice that $F^\star$ belongs to $\mathcal{P}$ in \eqref{eq_cvx_pool}.

Call $\mathcal{P}_n:=\text{Conv}(F_{1n},\ldots,F_{kn})$, for all $n\in\mathbb{N}$. 
In this example we have that $\mathcal{P}_n \subseteq \mathcal{P}_{n-1}$, for all $n\in\mathbb{Z}_+$, where $\mathcal{P}_0$ is set $\mathcal{P}$ in equation \eqref{eq_cvx_pool}. This means that the limit of sequence $(\mathcal{P}_n)$ is set $\cap_{n\in\mathbb{Z}_+}\mathcal{P}_n$. If the condition in Theorem \ref{degr_cons} is satisfied, then $\cap_{n\in\mathbb{Z}_+}\mathcal{P}_n=\{F^\star\}$. Given a generic set $A\in\mathcal{F}$, DeGroot procedure $\text{\cjRL{no}}=\text{DeGr}$ may only weakly constrict $A$ in general, for example if $F^\star$ belongs to the extrema of $\mathcal{P}\equiv\mathcal{P}_0$. That is, $F^\star(A)\geq \underline{F}(A)$ and $F^\star(A)< \overline{F}(A)$, or $F^\star(A)> \underline{F}(A)$ and $F^\star(A)\leq \overline{F}(A)$. Nevertheless, there may exist $r,s\in\mathbb{Z}_+$, $r<s$, such that $\underline{F}_s(A) > \underline{F}_r(A)$ and $\overline{F}_s(A)< \overline{F}_r(A)$, where $\underline{F}_s(A)=\inf_{F\in\mathcal{P}_s}F(A)$, $\overline{F}_s(A)=\sup_{F\in\mathcal{P}_s}F(A)$, and similarly for $\mathcal{P}_r$.


DeGroot model is one of the possible examples of an agent collecting evidence and then revising their initial opinion using a rule that is different from conditioning on the gathered data. We showed that there is at least one such procedure in which constriction can take place if all the gathered information is used. 


Notice that, in contrast with deGroot's model for consensus, in the light of Proposition \ref{prop_bayes}, there is no opportunity for constriction in Aumann's important ``Agreeing to Disagree'' model \cite{aumann}. We outline the reason why here.  Identify the group's initial IP set of probabilities $\mathcal{P}(A)$ about the event of interest $A$, with the lower and upper probabilities $\underline{P}(A)$ and $\overline{P}(A)$, taken with respect to minimum and maximum of the precise individual opinions about $A$ that results after each agent learns their ``private'' information.  That is, $\mathcal{P}(A)$ obtains at round 1 in Aumann's process. Since there are finitely many agents in the group, this IP set is closed. In Aumann's model, the agents then iteratively share their individual, precise probabilities about $A$.  At each subsequent round after the first, they use Bayes' updating to revise their individual probability of $A$, given the new, shared evidence of what they learn about the other's probability of $A$.  This procedure amounts to (iteratively) using generalized Bayes' updating for the closed IP set $\mathcal{P}(A)$ given the updated individual precise probabilities for event $A$.  By Aumann's theorem, after finitely many rounds the process reaches a fixed point where a consensus $P^\star(A)$ is reached.  But if $\underline{P}(A)<\overline{P}(A)$, then Proposition \ref{prop_bayes} establishes that it cannot be that the consensus opinion, $P^\star(A)$, always satisfies $\underline{P}(A)< P^\star(A) <\overline{P}(A)$.

\section{Constricting based on non-Bayesian updating}\label{data}
Suppose the results of an experiment induce a partition $\mathcal{E}=\{E_j\}$ of the state space of interest $\Omega$. Then, if we retain the assumption that probability measures are countably additive, conditioning on $E_j$ does not allow for constriction, for all $E_j\in\mathcal{E}$. So in general we have that constriction for all $E_j\in\mathcal{E}$ can take place only if we intentionally forget the whole experiment that induces partition $\mathcal{E}$. But if we are able to make assumptions on the nature of lower probability $\underline{P}$ associated with the set $\mathcal{P}$ of probabilities representing the agent's beliefs, and if we consider updating procedures that are alternative to Bayes' conditioning, we have opportunities for constriction. 

\subsection{Background}\label{backgr}
In this section, we give some background concepts that are needed to better understand the results that follow. Lower and upper probabilities (LP and UP, respectively) 
are a particular type of Choquet capacities.
\begin{definition}\label{choq_def}
Given a measurable space $(\Omega,\mathcal{F})$ with $\Omega \neq \emptyset$, we say that a set function $\nu:\mathcal{F} \rightarrow [0,1]$ is a \textit{Choquet capacity} if $\nu(\emptyset)=0$, $\nu(\Omega)=1$, and $\nu(A) \leq \nu(B)$ for all $A,B \in \mathcal{F}$ such that $A \subseteq B$.
\end{definition}

Denote by $\mathcal{M}:=\{P\in\Delta(\Omega,\mathcal{F}) : P(A)\geq \underline{P}(A) \text{, } \forall A \in \mathcal{F}\}$ the set of (countably additive) probability measures \textit{compatible} with $\underline{P}$ \cite{gong}, and assume it is nonempty and relatively compact. Notice also that $\mathcal{M}$ is convex. 
The following are special cases of lower probabilities. 
\begin{definition}\label{k-order}
LP $\underline{P}$ is a \textit{Choquet capacity of order $k$}, or \textit{$k$-monotone capacity}, if for every collection $\{A, A_1, \ldots , A_k\} \subseteq \mathcal{F}$ such that $A_i \subseteq A$, for all $i \in \{1, \ldots, k\}$, we have
\begin{equation}\label{k-mon}
    \underline{P}(A) \geq \sum_{\emptyset\neq \mathcal{I} \subseteq \{1, \ldots, k\}} (-1)^{\#\mathcal{I} -1} \underline{P}(\cap_{i\in \mathcal{I}} A_i).
\end{equation}
Its conjugate UP $\overline{P}$ is called a \textit{$k$-alternating capacity} because it satisfies that for every collection $\{A, A_1, \ldots , A_k\} \subseteq \mathcal{F}$ such that $A \subseteq A_i$, for all $i \in \{1, \ldots, k\}$,
\begin{equation}\label{k-alt}
    \overline{P}(A) \leq \sum_{\emptyset\neq \mathcal{I} \subseteq \{1, \ldots, k\}} (-1)^{\#\mathcal{I} -1} \overline{P}(\cup_{i\in \mathcal{I}} A_i),
\end{equation}
\end{definition}
A special case of LP that we will use in the remainder of the paper are \textit{convex LP's}, that are Choquet capacities of order $2$; they satisfy $\underline{P}(A\cup B) \geq \underline{P}(A)+\underline{P}(B)-\underline{P}(A\cap B)$, for all $A,B\in\mathcal{F}$. Another special case of  LP that we will use are belief functions.
\begin{definition}\label{belief}
A LP $\underline{P}$ is called a \textit{belief function} if it is a Choquet capacity of order $\infty$, i.e., if \eqref{k-mon} holds for every $k$.
\end{definition}

Unique to a belief function is  its intuitive interpretation as a random set object that realizes itself as subsets of $\Omega$.
\begin{definition}\label{mass_funct_def}
If $\underline{P}$ is a belief function, its associated \textit{mass function} is the non-negative set function $m:\mathcal{F} \rightarrow [0,1]$,
\begin{equation}\label{mass_funct}
     A \mapsto m(A):=\sum_{B\subseteq A} (-1)^{\#(A-B)} \underline{P}(B),
\end{equation}
where $A-B \equiv A \cap B^c$, and the subsets $B$ of $A$ have to belong to $\mathcal{F}$ as well.
\end{definition}
Properties of mass function $m$ are the following
\begin{itemize}
    \item[(a)] $m(\emptyset)=0$;
    \item[(b)] $\sum_{B\subseteq\Omega} m(B)=1$;
    \item[(c)] $\underline{P}(A)=\sum_{B\subseteq A} m(B)$, and is unique to $\underline{P}$.
\end{itemize}
Formula \eqref{mass_funct} is called the Möbius transform of $\underline{P}$ \cite{yager}. A mass function $m$ induces a precise probability distribution on $\mathcal{F}$, as the distribution of a random set. These concepts are further studied in \cite{gong}. Notice also that Definition \ref{mass_funct_def} only applies if $\Omega$ is a finite set. A general definition and Möbius characterization of belief functions on infinite sets has been given in \cite{davide_barbara}.

To update a set of probabilities $\mathcal{P}$ given a set $E \in\mathcal{F}$ is to replace set function $\underline{P}$ with a version of the conditional set function $\underline{P}^{\times}(\cdot \mid E)$. The definition of $\underline{P}^{\times}$
is precisely the job of the updating rule. Recall that we introduced generalized Bayes' rule of conditioning $\text{\cjRL{no}}=(B,E)$ in section \ref{intro}.

We now give the formal definitions of three additional updating rules for lower and upper probabilities. Generalized Bayes', geometric, and Dempster's rules are the ones that are most commonly used and studied in the literature, while Gärdenfors' rule is a very general updating mechanism that subsumes many other methods of belief revision. The reasons for why an agent endorses one instead of another are explored in \cite{gong,smets2}.
\begin{definition}\label{geom-def}
Let $\mathcal{P} \subseteq \Delta(\Omega,\mathcal{F})$ be closed and convex. Then, the conditional LP's and UP's according to the \textit{geometric rule} are set functions $\underline{P}^G$, $\overline{P}^G$ such that, for all $A,E\in\mathcal{F}$
\begin{align}\label{geom}
\begin{split}
\underline{P}^{(G,E)}(A)&\equiv\underline{P}^G(A\mid E):=\frac{\underline{P}(A\cap E)}{\underline{P}(E)} \quad \text{and}\\ 
    \overline{P}^{(G,E)}(A)&\equiv\overline{P}^G(A\mid E)=1-\underline{P}^{(G,E)}(A^c),
    \end{split}
\end{align}
provided that $\underline{P}(E)>0$.
\end{definition}
So the main difference between generalized Bayes' and geometric updating procedures is that the former considers the infimum of the ratio of $P(A\cap E)$ and $P(E)$, while the latter considers the ratio of the infima. We introduce next Dempter's updating rule.
\begin{definition}\label{dempster-def}
Call $E \in\mathcal{F}$ the collected evidence. Assume that $\underline{P}$ is a belief function having mass function $m$ and such that $\underline{P}(E)>0$. Let $\underline{P}_0$ be a separate belief function whose associated mass function $m_0$ is such that $m_0 (E) = 1$. The conditional belief function $\underline{P}^D(\cdot\mid E)$ is defined as 
$$\underline{P}^{(D,E)}(A)\equiv\underline{P}^D(A\mid E):=\underline{P}(A)\oplus \underline{P}_0(E), \quad \forall A\in\mathcal{F},$$
where combination operator $\oplus$ means that the mass function associated with $\underline{P}^D(\cdot\mid E)$ is
$$m^D(A\mid E)=\frac{\sum_{C\cap E=A} m(C)}{\sum_{C^\prime\cap E\neq\emptyset} m(C^\prime)}, \quad \forall A\in\mathcal{F}.$$
Consequently, Dempster’s updating rule yields the following. If $\mathcal{P}=\mathcal{M}$, then the LP's and UP's according to \textit{Dempster’s updating rule} are set functions $\underline{P}^D$, $\overline{P}^D$ such that, for all $A,E\in\mathcal{F}$
\begin{align}\label{demp}
\begin{split}
    \overline{P}^{(D,E)}(A)&\equiv\overline{P}^D(A\mid E):=\frac{\overline{P}(A\cap E)}{\overline{P}(E)} \quad \text{and}\\
    \underline{P}^{(D,E)}(A)&\equiv\underline{P}^D(A\mid E)=1-\overline{P}^{(D,E)}(A^c),
\end{split}
\end{align}
provided that $\overline{P}(E)>0$.
\end{definition}
If $\underline{P}$ is a belief function and $\mathcal{P}=\mathcal{M}$, the geometric rule appears to be a natural dual to Dempster’s rule. Operationally, though, they differ, as pointed out in \cite[Section 2.2]{gong}. The main difference is that Dempster’s rule requires $\underline{P}$ to be a belief function, while geometric rule does not. A thorough comparison of geometric and Dempster's rules can be found in \cite{dubois_prade,gilboa_beliefs}. In addition, an axiomatic extension of Dempster’s conditioning rule to $2$-monotone/$2$-alternating capacities has been given in \cite{submodular}. 

Finally, we introduce the following. 
\begin{definition}\label{garden}
Call $E \in\mathcal{F}$ the collected evidence. Assume that $\underline{P}$ is a belief function having mass function $m$. Consider a function $\mathfrak{f}:\mathcal{F}\times \mathcal{F}\rightarrow [0,1]$ having constraints
\begin{itemize}
    \item[(a)] $\sum_{B\in\mathcal{F}}\mathfrak{f}(B,X)=1$, for all $X\in\mathcal{F}$,
    \item[(b)] $B\subseteq E^c \implies \mathfrak{f}(B,X)=0$,
    \item[(c)] $\mathfrak{f}(\emptyset,X)=0$, for all $X\in\mathcal{F}$.
\end{itemize}
Then, the belief function $\underline{P}^{(I,E)}(\cdot)\equiv\underline{P}^I(\cdot\mid E)$ obtained according to \textit{Gärdenfors' generalized imaging updating rule} (GGI) is such that its associated mass function is given by
\begin{equation}\label{garden-eq}
    m^I(A\mid E)=\sum_{X\in\mathcal{F}} \mathfrak{f}(A,X) m(X), \quad \forall A\in\mathcal{F}.
\end{equation}
\end{definition}
Since $E$ is the collected information, we have that $E\sqcup E^c=\Omega$, where $\sqcup$ denotes the disjoint union. Equation \eqref{garden-eq} tells us that, upon learning that the evidence collected is not in $E^c$, then the probabilities given by $m$ to the (sub)events in $E^c$ are transferred to the ``closest'' (sub)events in $E$ according to function $\mathfrak{f}$. Constraint (a) is needed to ensure that the ``probability bits'' that are transferred are then normalized. Constraint (b) corresponds to the closed world assumption, which can be expressed as the statement ``if the evidence collected is not in $E^c$, then it must belong to $E$''. Smets \cite[Section C.6]{smets2} drops requirement (b) because he works under the open world assumption, which negates the previous statement to symbolize that the agent may have specified the state space they work with incorrectly. So if the evidence collected is not in $E^c$, it may be in $E$ but also in a superset of $E$ that the agent did not consider at the beginning of the experiment. Constraint (c) is just a sanity check. Function $\mathfrak{f}$ is a version of a conditional probability; it was first introduced in \cite{garden} and then generalized by \cite{smets2}. The choice of function $\mathfrak{f}$ informs how having collected evidence $E$ 
influences our change of beliefs around $A$. GGI subsumes many other existing updating rules \cite{smets2}.

In the remainder of the paper, we write $\text{\cjRL{no}}=(\times,E)$, $\times\in\{B,G,D,I\}$, to indicate the generalized Bayes', geometric, Dempster's, and Gärdenfors' updating rules, respectively, given collected evidence $E$. We write that $(\times,E)\looparrowleft A$ if given evidence $E$, rule $\times$ (strictly) constricts $A$, and $(\times,\mathcal{E})\looparrowleft A$ if rule $\times$ (strictly) constricts $A$ for all elements of partition $\mathcal{E}$.

\subsection{Intentional forgetting}\label{int_for}
In this section we show the following claim. If we are not willing to assume that $\underline{P}$ is at least convex -- let alone a belief function -- that is, if we can only use generalized Bayes' and geometric rules to update our beliefs, then we cannot obtain constriction for all the elements $E$ of a partition $\mathcal{E}$ representing the results of an experiment of interest. In this very general case, we need to forget in order to constrict. We retain the assumptions that $\mathcal{P}$ is closed and convex.
\begin{lemma}\label{lem-no-contr}
Let $\mathcal{E}$ be a measurable and denumerable partition of $\Omega$, and let $\times\in\{B,G\}$. Then, for any $A\in\mathcal{F}$ we have that 
$$\inf_{E\in\mathcal{E}} \underline{P}^\times(A\mid E) \leq \underline{P}(A) \quad \text{and} \quad \sup_{E\in\mathcal{E}} \overline{P}^\times(A\mid E) \geq \overline{P}(A).$$
\end{lemma}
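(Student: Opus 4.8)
The two displayed inequalities are conjugate to one another, so I would first trim the workload. Using $\overline{P}^\times(A\mid E)=1-\underline{P}^\times(A^c\mid E)$ (this holds for $\times=G$ by definition, and for $\times=B$ because $\sup=1-\inf$) together with $\overline{P}(A)=1-\underline{P}(A^c)$, the inequality $\sup_{E\in\mathcal{E}}\overline{P}^\times(A\mid E)\geq\overline{P}(A)$ is equivalent to $\inf_{E\in\mathcal{E}}\underline{P}^\times(A^c\mid E)\leq\underline{P}(A^c)$. Hence it suffices to prove the single statement $\inf_{E\in\mathcal{E}}\underline{P}^\times(A\mid E)\leq\underline{P}(A)$ for \emph{every} $A\in\mathcal{F}$, the upper bound then following by applying it to $A^c$.

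For generalized Bayes ($\times=B$) I would argue through one measure at a time and then interchange infima. Write $\mathcal{E}=\{E_j\}$. For each $P\in\mathcal{P}$ the law of total probability gives $P(A)=\sum_j P(A\mid E_j)\,P(E_j)$, a convex combination whose weights $P(E_j)$ sum to $1$; since a weighted average is never below the infimum of the averaged quantities, $\inf_j P(A\mid E_j)\leq P(A)$. Interchanging the two infima then yields
\[
\inf_{j}\underline{P}^B(A\mid E_j)=\inf_j\inf_{P\in\mathcal{P}}P(A\mid E_j)=\inf_{P\in\mathcal{P}}\inf_j P(A\mid E_j)\leq\inf_{P\in\mathcal{P}}P(A)=\underline{P}(A),
\]
which is exactly the reduced claim. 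Cells with $P(E_j)=0$ are harmless under the standing assumption that the measures in $\mathcal{P}$ agree on the null outcomes.

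For the geometric rule ($\times=G$) the definition $\underline{P}^G(A\mid E_j)=\underline{P}(A\cap E_j)/\underline{P}(E_j)$ does not factor through individual $P\in\mathcal{P}$, so the tower argument is unavailable and I would instead use super-additivity of the lower envelope. Since $A=\bigsqcup_j(A\cap E_j)$ and every $P\in\mathcal{P}$ satisfies $P(A)=\sum_j P(A\cap E_j)\geq\sum_j\underline{P}(A\cap E_j)$, taking the infimum over $P$ gives $\sum_j\underline{P}(A\cap E_j)\leq\underline{P}(A)$, i.e.
\[
\sum_j \underline{P}^G(A\mid E_j)\,\underline{P}(E_j)\leq\underline{P}(A).
\]
The plan is then to pass from this averaged bound to the pointwise bound $\inf_j\underline{P}^G(A\mid E_j)\leq\underline{P}(A)$.

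The main obstacle is precisely this last passage. Unlike the Bayes weights $P(E_j)$, the geometric weights $\underline{P}(E_j)$ need not sum to $1$: super-additivity delivers only $\sum_j\underline{P}(E_j)\leq1$, so the left-hand side is a sub-normalised average and one cannot immediately conclude that its infimum sits below $\underline{P}(A)$. The mediant estimate only gives $\inf_j\underline{P}^G(A\mid E_j)\leq\underline{P}(A)\big/\sum_j\underline{P}(E_j)$, which is weaker than wanted. Closing this gap is the crux, and I expect it to require extra structure on $\underline{P}$ beyond that of a bare lower envelope — for instance additivity of $\underline{P}$ across the cells of $\mathcal{E}$, which forces $\sum_j\underline{P}(E_j)=1$ and makes the average genuine, or a more careful choice of the measures witnessing the conditional infima. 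I would concentrate the real effort here, treating the Bayes case and the conjugacy reduction as routine.
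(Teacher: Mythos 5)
Your conjugacy reduction and your treatment of the generalized Bayes case are both fine: the per-measure law of total probability followed by the interchange of the two infima is essentially the paper's argument in different dress (the paper instead uses closedness of $\mathcal{P}$ to pick a $P_{(A)}\in\mathcal{P}$ with $P_{(A)}(A)=\underline{P}(A)$ and runs the tower property through that single measure). The genuine gap is exactly where you locate it: the geometric case is not proven. The sub-normalised bound $\sum_j\underline{P}^G(A\mid E_j)\,\underline{P}(E_j)\le\underline{P}(A)$ with $\sum_j\underline{P}(E_j)\le 1$ does not deliver $\inf_j\underline{P}^G(A\mid E_j)\le\underline{P}(A)$, and an honestly flagged gap is still a gap; since the lemma is stated for $\times\in\{B,G\}$, half of it is missing.

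The paper closes the geometric case by never abandoning the weights of a genuine probability measure. Writing $\underline{P}(A)=P_{(A)}(A)=\sum_j P_{(A)}(A\mid E_j)P_{(A)}(E_j)$ with $\sum_j P_{(A)}(E_j)=1$, it suffices to dominate the conditional terms, i.e.\ to show $\underline{P}^\times(A\mid E_j)\le P_{(A)}(A\mid E_j)$ for every $j$; the convex combination then sits above $\inf_j\underline{P}^\times(A\mid E_j)$ and one is done. For $\times=B$ that domination is just the definition of the infimum. For $\times=G$ it is the inequality $\underline{P}(A\cap E_j)/\underline{P}(E_j)\le P_{(A)}(A\cap E_j)/P_{(A)}(E_j)$, which the paper imports from the proof of Gong and Meng's Lemma 5.1 and which is precisely where the standing structural assumptions on $\mathcal{P}$ and $\underline{P}$ do their work --- so your instinct that the geometric case needs more than bare superadditivity of the lower envelope is correct. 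Be aware that this domination is not automatic: for a general coherent lower probability one has $\underline{P}^G(A\mid E)\ge\underline{P}^B(A\mid E)$, so $\underline{P}^G(A\mid E)$ can exceed $P(A\mid E)$ for particular $P\in\mathcal{P}$, including the one attaining $\underline{P}(A)$. To finish your proof you must either establish that domination for $P_{(A)}$ under the paper's hypotheses or argue along the cited lemma; the superadditive decomposition $\sum_j\underline{P}(A\cap E_j)\le\underline{P}(A)$ alone cannot be pushed through.
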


An immediate consequence of Lemma \ref{lem-no-contr} is the following.
\begin{theorem}\label{thm-no-contr}
Let $\mathcal{E}$ be a measurable and denumerable partition of $\Omega$. Then for $\times\in\{B,G\}$, we have that for all $A\in\mathcal{F}$, there exists $E\in\mathcal{E}$ such that $\text{\cjRL{no}}=(\times,{E})$ does not weakly constrict $A$.
\end{theorem}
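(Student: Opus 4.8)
The plan is to argue by contradiction and reduce everything to Lemma \ref{lem-no-contr} together with the standing hypothesis (retained throughout this section) that $\mathcal P$ is closed and convex. Suppose, toward a contradiction, that for the fixed $A$ the procedure $(\times,E)$ weakly constricts $A$ for \emph{every} $E\in\mathcal E$. Unwinding Definition \ref{contr_def}, each of the two admissible forms of weak constriction forces \emph{both} $\underline P^\times(A\mid E)\ge \underline P(A)$ and $\overline P^\times(A\mid E)\le\overline P(A)$ (only the strictness of one of the two is demanded). Passing to the infimum and supremum over $E\in\mathcal E$ yields $\inf_{E}\underline P^\times(A\mid E)\ge\underline P(A)$ and $\sup_E\overline P^\times(A\mid E)\le\overline P(A)$, while Lemma \ref{lem-no-contr} supplies the reverse inequalities. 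Both must therefore be equalities,
\[
\inf_{E\in\mathcal E}\underline P^\times(A\mid E)=\underline P(A),\qquad \sup_{E\in\mathcal E}\overline P^\times(A\mid E)=\overline P(A).
\]
This already excludes any \emph{uniform} gap, but to exhibit a single $E$ that fails to constrict I must still rule out the possibility that the bounds are merely approached and never realized.

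The second step uses closedness to upgrade these equalities to a pointwise statement. Because $\mathcal P$ is closed and $\mathcal M$ is relatively compact, the envelope is attained: pick $P_\star\in\mathcal P$ with $P_\star(A)=\underline P(A)$ and $P^\star\in\mathcal P$ with $P^\star(A)=\overline P(A)$. For $\times=B$ I apply the law of conditional expectations to $P_\star$ over the denumerable partition, $\underline P(A)=P_\star(A)=\sum_{E}P_\star(A\mid E)\,P_\star(E)$, summed over the $E$ with $P_\star(E)>0$. By the contradiction hypothesis each summand obeys $P_\star(A\mid E)\ge \inf_{Q\in\mathcal P}Q(A\mid E)=\underline P^{B}(A\mid E)\ge\underline P(A)$, so a convex combination of quantities all $\ge\underline P(A)$ can equal $\underline P(A)$ only if $P_\star(A\mid E)=\underline P(A)$ on the entire support; squeezing then gives $\underline P^{B}(A\mid E)=\underline P(A)$ there. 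The symmetric computation with $P^\star$ gives $\overline P^{B}(A\mid E)=\overline P(A)$ on its support. Invoking the standing convention that every $P\in\mathcal P$ agrees on which outcomes are null, the two supports coincide, and on that common nonempty support every $E$ satisfies $[\underline P^{B}(A\mid E),\overline P^{B}(A\mid E)]=[\underline P(A),\overline P(A)]$; hence neither inequality of Definition \ref{contr_def} is strict and $(\times,E)$ does \emph{not} weakly constrict $A$, the desired contradiction. For $\times=G$ I would run the identical scheme, but since $\underline P^{G}(A\mid E)=\underline P(A\cap E)/\underline P(E)$ is not an infimum of ordinary conditional probabilities, the law-of-total-probability step is replaced by the corresponding inequality that Lemma \ref{lem-no-contr} already provides, again combined with closedness to localize the equality to a cell of $\mathcal E$.

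I expect the heart of the matter to be exactly this passage from the squeezed equalities to a \emph{single} non-constricting cell. The inequalities of Lemma \ref{lem-no-contr} are, on their own, consistent with the scenario in which $\underline P^\times(A\mid E)>\underline P(A)$ and $\overline P^\times(A\mid E)<\overline P(A)$ hold for every individual $E$ while the infimum and supremum are attained only in the limit; this is precisely the \emph{merely pointwise} constriction of Example \ref{ex_pw_constr}, where $\mathcal P(A)$ is \emph{open}. Closedness of $\mathcal P$ is what forecloses it: it forces the envelope to be realized by some $P_\star,P^\star\in\mathcal P$, and the law of total probability then transfers that attainment from the unconditional level down to at least one element of the partition. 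The residual bookkeeping — the support identification under the shared-null convention, and checking that the argument never needs the (countable) $\inf$ and $\sup$ to be attained over the index set, only that the envelope is attained by a measure in $\mathcal P$ — is routine once the minimizer and maximizer are in hand; the geometric rule is the more delicate of the two and is the place where I would lean most directly on the statement of Lemma \ref{lem-no-contr} rather than re-deriving it.
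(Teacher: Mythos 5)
Your proof is correct, but it is doing noticeably more work than the paper, which disposes of this theorem with the single line ``Immediate from Lemma \ref{lem-no-contr}.'' The extra work is justified: as you observe, the \emph{statement} of Lemma \ref{lem-no-contr} only controls $\inf_{E}\underline{P}^\times(A\mid E)$ and $\sup_{E}\overline{P}^\times(A\mid E)$, and over a denumerable partition these bounds could in principle be approached without being attained, leaving every individual $E$ strictly constricting --- exactly the ``merely pointwise'' pattern of Example \ref{ex_pw_constr}. What actually closes this loophole is the intermediate inequality inside the lemma's \emph{proof}, namely $\underline{P}(A)=P_{(A)}(A)\geq \sum_{E}\underline{P}^\times(A\mid E)P_{(A)}(E)$ for a minimizer $P_{(A)}\in\mathcal{P}$ (which exists by closedness): a countable convex combination of terms all $\geq\underline{P}(A)$ can equal $\underline{P}(A)$ only if every term on the support is an equality. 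Your squeeze via the law of total probability is precisely this step made explicit, so your route is the paper's argument with the suppressed attainment step restored. Two small soft spots remain. First, for $\times=G$ you say you would ``lean on the statement of Lemma \ref{lem-no-contr}''; but the statement alone is what was insufficient to begin with --- you need the inequality $P_{(A)}(A\mid E)\geq\underline{P}^{G}(A\mid E)$ asserted inside the lemma's proof, so you should cite that inequality (or the displayed weighted-sum bound) rather than the lemma's conclusion. Second, the support-overlap issue you resolve by the shared-null convention is genuinely needed for $\times=B$; for $\times=G$ it is automatic, since Definition \ref{geom-def} requires $\underline{P}(E)>0$, hence every $P\in\mathcal{P}$ charges every cell. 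Neither point is a gap so much as a place where the justification should point at the right line of the lemma's proof.
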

This result tells us that for any event $A\in\mathcal{F}$ of interest, we can never find an element of $\mathcal{E}$ that (even weakly) constricts $A$. In turn, this implies that there exist $E_1,E_2\in\mathcal{E}$, $E_1$ possibly different than $E_2$, such that $[\underline{P}(A),\overline{P}(A)] \subset [\underline{P}^B(A\mid E_1),\overline{P}^B(A \mid E_1)]$ and $[\underline{P}(A),\overline{P}(A)] \subset [\underline{P}^G(A\mid E_2),\overline{P}^G(A \mid E_2)]$. 
As it appears clear, if the agent is unwilling to make any extra assumption on the nature of $\mathcal{P}$, then there is no opportunity for constriction to take place for all the elements of partition $\mathcal{E}$.  

In \cite[Theorem 2.3]{seidenfeld_dil} the authors give sufficient conditions for dilation to take place for all $E\in\mathcal{E}$. Then, intentionally forgetting altogether the experiment that dilates $A$ seems the only viable option to reach constriction. As the name suggests, intentional forgetting corresponds to an agent willingly forgetting pieces of information, for example because they are redundant, because they may be harmful, or because they are instructed to do so. If after collecting evidence $E$ our current beliefs are encapsulated in lower and upper probabilities $\underline{P}^\times(\cdot\mid E)$ and $\overline{P}^\times(\cdot\mid E)$, respectively, then by forgetting we mean reversing the learning process so that our ``updated'' lower and upper probabilities becomes what used to be the lower and upper ``priors'', i.e. $\underline{P}(\cdot)$ and $\overline{P}(\cdot)$, respectively.

The topic of forgetting is studied in statistics. In \cite{moens}, for example, forgetting is intended in the sense of \textit{stabilized forgetting}; with this we mean the following. Suppose that the agent is operating in an environment that is susceptible to changes. 
Then, the agent's response to surprising events depends on their beliefs about how likely the environment is to change. If it is volatile, a single unexpected event triggers forgetting of past beliefs and relearning of a new contingency. So at time $t$, the agent collects evidence $E_t$; they use it to infer whether the environment has changed or not. In the former case, they erase their memory of past events and reset their prior belief to their initial prior knowledge. In the latter, they can learn a new posterior belief of the environment structure based on their previous belief. Another example is given by limited memory procedures. In \cite{baudry,barry}, the authors study bandit problems based on limited memory: working with restricted memory, data that are too old are forgotten.

Forgetting is studied in machine learning (ML) as well. In \cite{cao}, for instance, the authors come up with an algorithm that features a forgetting factor which balances the relative importance of new data and past data and adjust the model to pay more attention to the new data when the concept drift is detected. In this framework, forgetting is intended as in past data progressively losing importance as new evidence is collected. Another example in ML where forgetting is crucial is \textit{continual learning}: as data gets discarded and has a limited lifetime, the ability to forget what is not important and retain what matters for the future are the main issues that continual learning targets and focuses on \cite{lesort}.

Finally, and rather unsurprisingly, psychologists and cognitive scientists have thoroughly inspected the phenomenon of forgetting. The reference textbook that investigates intentional forgetting is \cite{int_for}. The authors examine the effect on memory of instructions to forget in a wide variety of contexts.  They point out how with the enormous number of information available nowadays, online forgetting of some information is necessary, and how often times replacing existing information with new information is mandatory (think of a person changing their phone number). Study on intentional forgetting stemmed from the phenomenon of \textit{directed forgetting}: we are able to deal more effectively with large amounts of information by following instructions to treat some of the information as ``to be forgotten'' (e.g. evidence presented in a courtroom that, being inadmissible, is asked to be disregarded). In this way, interference is reduced and we are able to devote all of our resources to the remaining to-be-remembered information. It is easy to see how stabilized forgetting is a particular case of intentional forgetting, and so is the forgetting factor approach used in the machine learning literature.

Consider the problem of an agent that expresses their initial beliefs on $(\Omega,\mathcal{F})$ via a set of probabilities $\mathcal{P}=\mathcal{M}$. For convenience, we write $\mathcal{P} \equiv \mathcal{P}_{E_0}$. As data become available, they update their beliefs using Bayes' rule of conditioning for every element of $\mathcal{P}$. With this we mean the following. Suppose we collect evidence in the form of $E_1 \subseteq \mathcal{F}$; then we update the elements of $\mathcal{P}$ to obtain
\begin{align*}
    \mathcal{P}_{E_1}:=\bigg\{&P_{E_1}\in\Delta(\Omega,\mathcal{F}) : P_{E_1}(A)\equiv P(A\mid E_1)\\
    = &\frac{P(E_1\mid A) P(A)}{P(E_1)}\propto P(E_1\mid A) P(A) \text{, } \forall A\in\mathcal{F} \bigg\},
\end{align*}
where $P\in \mathcal{P}$ represents the prior and $P(E_1\mid \cdot)$ represents the likelihood. More in general, let the evidence collected up to time $t>0$ be encapsulated in collection $\{E_k\}_{k=1}^t \subseteq \mathcal{F}$. Then, the agent's updated opinion is given by set 
\begin{align*}
    \mathcal{P}_{E_1\cdots E_t}&:=\bigg\{P_{E_1\cdots E_t}: P_{E_1\cdots E_t}(A)\equiv P_{E_1\cdots E_{t-1}}(A\mid E_t)\\ 
    &\propto P(E_t\mid A,E_1,\ldots,E_{t-1}) P_{E_1\cdots E_{t-1}}(A) \text{, } \forall A\in\mathcal{F} \bigg\},
\end{align*}
where $\mathcal{P}_{E_1\cdots E_{t-1}}\ni P_{E_1\cdots E_{t-1}} (\cdot) \equiv P(\cdot \mid E_1\cdots E_{t-1})$ represents the ``revised'' prior (that is, the posterior computed at time $t-1$) and $P(E_t\mid \cdot)$ represents the likelihood.

Pick any $k\in\mathbb{N}$ such that $1\leq k\leq t$. Assume that $\mathcal{P}_{E_1\cdots E_{t-k}}$ and $\mathcal{P}_{E_1\cdots E_{t-k} \cdots E_t}$ are both convex and closed.
Fix an event $A\in\mathcal{F}$ of interest and define ${\mathcal{P}_\star}_{E_1\cdots E_{t-k}}(A):=\{P_{E_1\cdots E_{t-k}}\in\mathcal{P}_{E_1\cdots E_{t-k}} : P_{E_1\cdots E_{t-k}}(A)=\underline{P}_{E_1\cdots E_{t-k}}(A)\}$ and ${\mathcal{P}^\star}_{E_1\cdots E_{t-k}}(A):=\{P_{E_1\cdots E_{t-k}}\in\mathcal{P}_{E_1\cdots E_{t-k}} : P_{E_1\cdots E_{t-k}}(A)=\overline{P}_{E_1\cdots E_{t-k}}(A)\}$. For a generic $P\in\Delta(\Omega,\mathcal{F})$, and generic $A,B\in\mathcal{F}$, define notion of dependence $d_P$ and sets induced by its value by
\begin{align*}
    d_P(A,B)&:=P(A\cap B)-P(A)P(B),\\
    \Sigma^+(A,B)&:=\{P\in\Delta(\Omega,\mathcal{F}):d_P(A,B)>0\},\\
    \Sigma^-(A,B)&:=\{P\in\Delta(\Omega,\mathcal{F}):d_P(A,B)<0\}.\\
\end{align*}
Call now $\text{\cjRL{no}}=(\text{IF}_\times,\mathscr{E})$ the procedure of intentionally forgetting evidence $\mathscr{E}$ (where $\mathscr{E}$ can be an element of $\mathcal{F}$, a whole partition $\mathcal{E}$ of state space $\Omega$, or a collection $\{\mathcal{E}\}$ of partitions) after having updated endorsing rule $\times$. Then, the following gives sufficient conditions for intentional forgetting to induce constriction.

\begin{theorem}\label{suff_cond_contr}
Fix an event $A\in\mathcal{F}$ of interest and let the agent endorse any rule $\times\in\{B,G\}$. If
$${\mathcal{P}_\star}_{E_1\cdots E_{t-k}}(A) \cap \Sigma^-(A,E_{t-k+1}\cap \cdots \cap E_t) \neq \emptyset$$
and
$${\mathcal{P}^\star}_{E_1\cdots E_{t-k}}(A) \cap \Sigma^+(A,E_{t-k+1}\cap \cdots \cap E_t) \neq \emptyset,$$
then forgetting $E_{t-k+1}\cap \cdots \cap E_t$ strictly constricts $A$, in symbols $(\text{IF}_\times,E_{t-k+1}\cap\cdots\cap E_t)\looparrowleft A$.
\end{theorem}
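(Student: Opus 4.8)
The plan is to read the procedure $\text{\cjRL{no}}=(\text{IF}_\times,F)$, with $F:=E_{t-k+1}\cap\cdots\cap E_t$, as the exact reversal of updating the set $\mathcal{Q}:=\mathcal{P}_{E_1\cdots E_{t-k}}$ on the combined evidence $F$ via rule $\times$. By the forgetting convention of this section, the ``before'' interval is then the conditional one $[\underline{P}^\times(A\mid F),\overline{P}^\times(A\mid F)]$ and the ``after'' interval is $[\underline{P}_{E_1\cdots E_{t-k}}(A),\overline{P}_{E_1\cdots E_{t-k}}(A)]$, which I abbreviate $[\underline{Q}(A),\overline{Q}(A)]$; for generalized Bayes one may additionally note that conditioning $\mathcal{Q}$ successively on $E_{t-k+1},\dots,E_t$ is the same as conditioning once on $F$. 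With this reading, $(\text{IF}_\times,F)\looparrowleft A$ is equivalent to the claim that updating on $F$ strictly dilated $A$, i.e. $\underline{P}^\times(A\mid F)<\underline{Q}(A)$ and $\overline{P}^\times(A\mid F)>\overline{Q}(A)$. The whole proof thus reduces to exhibiting, from the two nonemptiness hypotheses, single measures witnessing these two strict inequalities.

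For $\times=B$ the argument is direct and is where I would start. A measure $P_\star\in{\mathcal{P}_\star}_{E_1\cdots E_{t-k}}(A)\cap\Sigma^-(A,F)$ satisfies $P_\star(A)=\underline{Q}(A)$ and $d_{P_\star}(A,F)<0$; the latter forces $P_\star(F)>0$ (else both sides of $P_\star(A\cap F)=P_\star(A)P_\star(F)$ vanish, contradicting strictness), so $P_\star(A\mid F)$ is defined and $d_{P_\star}(A,F)<0$ reads precisely $P_\star(A\mid F)<P_\star(A)$. Since $\underline{P}^B(A\mid F)=\inf_{P\in\mathcal{Q}}P(A\mid F)$, this gives $\underline{P}^B(A\mid F)\le P_\star(A\mid F)<P_\star(A)=\underline{Q}(A)$. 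The dual hypothesis furnishes $P^\star$ with $P^\star(A)=\overline{Q}(A)$ and $P^\star(A\mid F)>P^\star(A)$, whence $\overline{P}^B(A\mid F)\ge P^\star(A\mid F)>\overline{Q}(A)$. These are exactly the two dilation inequalities, so forgetting strictly uniformly constricts and $(\text{IF}_B,F)\looparrowleft A$.

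For $\times=G$ I would unfold $\underline{P}^G(A\mid F)=\underline{Q}(A\cap F)/\underline{Q}(F)$ and $\overline{P}^G(A\mid F)=1-\underline{Q}(A^c\cap F)/\underline{Q}(F)$, so that the two targets become $\underline{Q}(A\cap F)<\underline{Q}(A)\,\underline{Q}(F)$ and, via $\overline{Q}(A)=1-\underline{Q}(A^c)$, its $A^c$-dual $\underline{Q}(A^c\cap F)<\underline{Q}(A^c)\,\underline{Q}(F)$. The second hypothesis fits this dual form, since $P^\star(A)=\overline{Q}(A)$ yields $P^\star(A^c)=\underline{Q}(A^c)$ and $d_{P^\star}(A^c,F)=-d_{P^\star}(A,F)<0$, so $P^\star$ relates to $A^c$ as $P_\star$ relates to $A$. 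I expect this to be the main obstacle: the witnesses only yield $\underline{Q}(A\cap F)\le P_\star(A\cap F)<\underline{Q}(A)\,P_\star(F)$, and passing from $P_\star(F)$ to the infimum $\underline{Q}(F)$ is exactly the difference between an infimum of ratios (which the $\Sigma^\pm$ conditions control) and a ratio of infima (which defines the geometric rule). Closing this gap appears to need more than the stated hypotheses—e.g. a witness simultaneously attaining $\underline{Q}(A)$ and $\underline{Q}(F)$, or a monotonicity assumption on $\underline{Q}$ dominating $\underline{Q}(A\cap F)/\underline{Q}(F)$—since in general a ratio of infima can exceed the product $\underline{Q}(A)\,\underline{Q}(F)$ even when every individual measure dilates $A$. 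I would therefore either supply such an extra condition for the geometric case or confine the clean argument to $\times=B$.
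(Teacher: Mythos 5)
Your argument for $\times=B$ is essentially the paper's proof. Writing $F:=E_{t-k+1}\cap\cdots\cap E_t$, the paper picks a witness $P_\star\in{\mathcal{P}_\star}_{E_1\cdots E_{t-k}}(A)\cap\Sigma^-(A,F)$, uses $d_{P_\star}(A,F)<0$ to get $P_\star(A\mid F)<P_\star(A)=\underline{P}_{E_1\cdots E_{t-k}}(A)$, bounds $\underline{P}^\times(A\mid F)\le P_\star(A\mid F)$, argues dually with a witness from ${\mathcal{P}^\star}_{E_1\cdots E_{t-k}}(A)\cap\Sigma^+(A,F)$ for the upper envelope, and then reads strict dilation under updating on $F$ as strict constriction under forgetting $F$ --- exactly your chain, including the observation that the forgetting convention turns the dilation inequalities into the constriction ones. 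So the generalized Bayes half of your proposal is complete and correct, and matches the paper.

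The place where you stop --- the geometric rule --- is handled in the paper by the \emph{same} displayed chain of inequalities, whose last step is $P_\star(A\mid F)\ge\underline{P}^\times_{E_1\cdots E_{t-k}}(A\mid F)$ asserted uniformly for $\times\in\{B,G\}$. For $B$ this is just the definition of the lower envelope; for $G$ it amounts to $P_\star(A\cap F)/P_\star(F)\ge\underline{P}(A\cap F)/\underline{P}(F)$, which is precisely the ratio-of-infima versus infimum-of-ratios issue you isolate. Since in general $\underline{P}^G(A\mid F)\ge\underline{P}^B(A\mid F)=\inf_{P}P(A\mid F)$, possibly strictly, a particular conditional such as $P_\star(A\mid F)$ can lie strictly below $\underline{P}^G(A\mid F)$, and the theorem's hypotheses constrain only measures attaining $\underline{P}(A)$ and $\overline{P}(A)$, not those attaining $\underline{P}(F)$ or $\underline{P}(A\cap F)$. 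So your diagnosis is accurate: the clean argument covers $\times=B$, and the geometric case, as justified in the paper, rests on an inequality that is not automatic and would need either an extra hypothesis (e.g.\ a witness simultaneously attaining $\underline{P}(A)$ and $\underline{P}(F)$, as you suggest) or a separate argument. You are not missing an idea that the paper supplies; you have located the one step of the paper's proof that is stated without justification.
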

If this holds for all elements $E_s$ of partition $\mathcal{E}_s$, $s\in\{t-k+1,\ldots,t\}$, we write 
$$(\text{IF}_\times,\mathcal{E}_{t-k+1},\ldots,\mathcal{E}_{t}) \looparrowleft A,$$
so we can forget all the experiments that took place after time $t-k$. If we let $k=t$, then we obtain stabilized forgetting as in \cite{moens}. Notice that for stabilized forgetting subscript $E_1\cdots E_{t-k}$ in Theorem \ref{suff_cond_contr} is substituted by $E_0$. We also have the following.
\begin{corollary}\label{suff_cond_contr:cor}
Fix an event $A\in\mathcal{F}$ of interest and let the agent endorse any rule $\times\in\{B,G\}$. If
$${\mathcal{P}_\star}_{E_k\cdots E_{t}}(A) \cap \Sigma^-(A,E_{1}\cap \cdots \cap E_{k-1}) \neq \emptyset$$
and
$${\mathcal{P}^\star}_{E_k\cdots E_{t}}(A) \cap \Sigma^+(A,E_{1}\cap \cdots \cap E_{k-1}) \neq \emptyset,$$
then forgetting $E_{1}\cap \cdots \cap E_{k-1}$ strictly constricts $A$, in symbols $(\text{IF}_\times,E_{1}\cap \cdots \cap E_{k-1}) \looparrowleft A$.
\end{corollary}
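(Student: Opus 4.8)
The plan is to deduce Corollary \ref{suff_cond_contr:cor} from Theorem \ref{suff_cond_contr} by observing that both admissible updating rules are insensitive to the order in which the conditioning events are processed, and then relabeling the evidence so that the block we wish to forget sits at the \emph{end} of the sequence rather than at the beginning. Theorem \ref{suff_cond_contr} forgets a suffix $E_{t-k+1},\dots,E_t$ of the evidence stream; the Corollary forgets a prefix $E_1,\dots,E_{k-1}$. If updating is order-invariant, these are the same operation up to reindexing, and no new argument is needed.

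First I would record the order-invariance of the update. For $\times=B$, iterating Bayesian conditioning yields $P_{E_1\cdots E_t}(\cdot)=P(\cdot\mid E_1\cap\cdots\cap E_t)$, so $\mathcal{P}_{E_1\cdots E_t}=\{P(\cdot\mid\bigcap_i E_i):P\in\mathcal{P}\}$ depends only on the intersection $\bigcap_{i=1}^t E_i$ and is therefore symmetric in the indices. For $\times=G$ the same holds: conditioning on $E_i$ and then on $E_j$ gives, for every $A$,
$$\frac{\underline{P}^{(G,E_i)}(A\cap E_j)}{\underline{P}^{(G,E_i)}(E_j)}=\frac{\underline{P}(A\cap E_i\cap E_j)/\underline{P}(E_i)}{\underline{P}(E_i\cap E_j)/\underline{P}(E_i)}=\frac{\underline{P}(A\cap E_i\cap E_j)}{\underline{P}(E_i\cap E_j)}=\underline{P}^{(G,E_i\cap E_j)}(A),$$
which is symmetric in $i,j$, so the iterated geometric update likewise depends only on the collection $\{E_1,\dots,E_t\}$. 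Consequently, for both rules the fully updated object $\mathcal{P}_{E_1\cdots E_t}$ is unchanged under any permutation of the evidence, and processing the original prior first on the block $E_k,\dots,E_t$ and then on the block $E_1,\dots,E_{k-1}$ reproduces $\mathcal{P}_{E_1\cdots E_t}$.

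With this in hand I would reorder the sequence as $(E_k,E_{k+1},\dots,E_t,E_1,\dots,E_{k-1})$. Relative to this ordering the ``revised prior'' — the set obtained after processing the first $t-k+1$ pieces — is exactly $\mathcal{P}_{E_k\cdots E_t}$, and the last $k-1$ pieces, whose intersection is $E_1\cap\cdots\cap E_{k-1}$, form the block to be forgotten. Applying Theorem \ref{suff_cond_contr} to this reordered stream, with its number of forgotten pieces taken equal to $k-1$, the two nonemptiness hypotheses become precisely
$${\mathcal{P}_\star}_{E_k\cdots E_t}(A)\cap\Sigma^-(A,E_1\cap\cdots\cap E_{k-1})\neq\emptyset \quad\text{and}\quad {\mathcal{P}^\star}_{E_k\cdots E_t}(A)\cap\Sigma^+(A,E_1\cap\cdots\cap E_{k-1})\neq\emptyset,$$
which are exactly the hypotheses of the Corollary, while the Theorem's conclusion reads $(\text{IF}_\times,E_1\cap\cdots\cap E_{k-1})\looparrowleft A$, as claimed.

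The only point demanding care is the order-invariance step: the entire reduction rests on $\mathcal{P}_{E_1\cdots E_t}$ being a function of the collection $\{E_i\}$ alone, so that forgetting a prefix and forgetting a suffix are genuinely the same operation after relabeling. Once this is verified for both $B$ and $G$ — together with the standing convexity and closedness hypotheses, now imposed on $\mathcal{P}_{E_k\cdots E_t}$ and $\mathcal{P}_{E_1\cdots E_t}$ in place of $\mathcal{P}_{E_1\cdots E_{t-k}}$ and $\mathcal{P}_{E_1\cdots E_t}$ — the Corollary is nothing more than a reindexed instance of Theorem \ref{suff_cond_contr}.
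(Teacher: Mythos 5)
Your proof is correct, but it takes a genuinely different route from the paper's. The paper disposes of the corollary by declaring it ``analogous to the proof of Theorem \ref{suff_cond_contr}'': one picks $P_{E_k\cdots E_{t}}\in{\mathcal{P}_\star}_{E_k\cdots E_{t}}(A)\cap\Sigma^-(A,E_{1}\cap\cdots\cap E_{k-1})$ and reruns the same two-line dilation computation,
$\underline{P}_{E_k\cdots E_{t}}(A)=P_{E_k\cdots E_{t}}(A)>P_{E_k\cdots E_{t}}(A\mid E_{1}\cap\cdots\cap E_{k-1})\geq\underline{P}^\times_{E_k\cdots E_{t}}(A\mid E_{1}\cap\cdots\cap E_{k-1})$,
together with its mirror image for the upper probability; that computation is entirely agnostic as to whether the forgotten block is a prefix or a suffix of the evidence stream, so no commutativity fact is needed. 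You instead treat Theorem \ref{suff_cond_contr} as a black box and reduce the corollary to it by reordering the stream, which forces you to establish order-invariance of the iterated update. That reduction is sound: elementwise Bayes conditioning depends only on $\bigcap_i E_i$, and your check that the iterated geometric update is likewise symmetric in the conditioning events is correct (though strictly speaking the paper defines the intermediate sets $\mathcal{P}_{E_1\cdots E_s}$ by elementwise Bayes conditioning for both choices of $\times$, so the geometric computation is a safety check rather than a necessity). What your route buys is that it makes explicit the commutativity needed to read $\underline{P}^\times_{E_k\cdots E_{t}}(\cdot\mid E_{1}\cap\cdots\cap E_{k-1})$ as the agent's actual current, pre-forgetting belief state --- a point the paper leaves implicit; what it costs is precisely that extra lemma, which the direct argument avoids because the hypotheses of the corollary are already phrased in terms of $\mathcal{P}_{E_k\cdots E_{t}}$ and the forgotten intersection. (Both versions implicitly require $k\geq 2$ so that the forgotten block is nonempty; that restriction is inherited from the statement, not introduced by your argument.)
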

In this case, we obtain the machine learning version of forgetting \cite{cao}. The agent intentionally forgets data collected before time $k$. More formally, the authors weight the evidence $E_t$ collected at each time $t$ by a coefficient depending on $t$ that goes to $0$ the farther time $t$ is from present time $T$, that is, it goes to $0$ as $|t-T|$ grows to infinity. Evidence that is old enough gets severely discounted, to the point that for practical purpose we can consider it as being forgotten, and so the result in Corollary \ref{suff_cond_contr:cor} applies. 

Notice that intentional forgetting is always a viable way of inducing constriction, as long as the selected updating rule induces dilation first. In this section we focused on $\times\in\{B,G\}$ because generalized Bayes' and geometric rules are the most general ones we presented (they do not require the lower probability of interest to be a belief function).

\subsubsection{Levi-Neutrality}\label{levi}
A particular type of forgetting is the one inspired by Levi's work on corrigible infallibility, see e.g. \cite{levi2}. Suppose that at time $t$ an agent is equipped with a body of beliefs $\mathcal{K}_t$ regarding the events in $\mathcal{F}$, that is, a collection of logical predicates that describe the beliefs of the agent at time $t$.\footnote{Not to be confused with belief functions.} For the purpose of studying constriction, we focus on the (relatively compact and convex) set $\mathcal{P}_{\mathcal{K}_t}$ of probability measures representing the agent's beliefs at time $t$ induced by $\mathcal{K}_t$.\footnote{In particular, we assume $\mathcal{P}_{\mathcal{K}_t}=\mathcal{M}_{\mathcal{K}_t}:=\{P\in\Delta(\Omega,\mathcal{F}) : P(A)\geq \underline{P}_{\mathcal{K}_t}(A) \text{, } \forall A \in \mathcal{F}\}$.} Consider a generic event $H \subseteq \Omega$; every element ${P}_{\mathcal{K}_t}$ of $\mathcal{P}_{\mathcal{K}_t}$ has the following two properties:
\begin{itemize}
    \item if $\mathcal{K}_t$ rules out $H$, that is, if given the beliefs at time $t$ event $H$ is considered impossible -- written $\mathcal{K}_t \mapsto \neg H$ --, then $P_{\mathcal{K}_t}(A\mid H)$ is not well defined, for all $A\in\mathcal{F}$;
    \item if instead $\mathcal{K}_t$ does not rule out $H$, then $P_{\mathcal{K}_t}(A\mid H)$ is well defined, for all $A\in\mathcal{F}$, and represents the agent's beliefs around the plausibility of $A$ if $H$ obtains.
\end{itemize}
The set $\mathcal{P}_{\mathcal{K}_t}$ is compatible with lower probability $\underline{P}_{\mathcal{K}_t}$. Suppose then that at time $t+1$ the agent collects evidence $E$. Let $\mathcal{K}_{t+1}$ be the new body of beliefs; abusing notation, we write $\mathcal{K}_{t}\cup\{E\}$. Then, suppose that the agent endorses either of generalized Bayes' or geometric rules, $\times\in\{B,G\}$; the updated beliefs of the agent are encapsulated in $\underline{P}^\times_{\mathcal{K}_t}(\cdot\mid E)$. 
If (according to $\times$) $E$ dilates an event $A^\prime$ of interest, then the agent can neglect $E$ to obtain constriction; they run ``reverse conditioning''. We say that the agent is Levi-neutral towards $E$. Their infallible beliefs, encapsulated in $\mathcal{K}_{t+1}$, are subject to being corrected, and we have $\mathcal{K}_{t+2}=\mathcal{K}_{t}$. The body of beliefs contracts: $\mathcal{K}_{t+1}$ loses element $E$ and goes back to what used to be at time $t$. We write $(\text{LN}_\times,E)\looparrowleft A^\prime$ to denote that being Levi-neutral towards $E$ -- after having updated endorsing rule $\times$ -- constricts $A^\prime$. If this holds for all elements of a partition $\mathcal{E}$ of $\Omega$ representing the possible outcomes of an experiment of interest, we write $(\text{LN}_\times,\mathcal{E})\looparrowleft A^\prime$.

\subsection{Assumptions on the nature of $\underline{P}$}\label{ass-nature}
If the agent is willing to make some assumptions on the type of lower probability $\underline{P}$ that represents their beliefs, then we can have constriction for all $E\in\mathcal{E}$ without resorting to intentional forgetting. As the proverb goes, there is no free lunch. The following is Theorem 5.9 in \cite{gong}.
\begin{theorem}\label{geom-demps}
Let $\mathcal{E}=\{E,E^c\}$ be the partition associated with the outcomes of the experiment of interest. Assume that $\underline{P}$ is a belief function such that $\underline{P}(E),\underline{P}(E^c)>0$, and consider any event $A\in\mathcal{F}$. Then, if $\mathcal{E}$ dilates $A$ under the Geometric rule, then it must constrict $A$ under Dempster’s rule. 
Similarly, if $\mathcal{E}$ dilates $A$ under Dempster’s rule, then it must constrict $A$ under the Geometric rule.
\end{theorem}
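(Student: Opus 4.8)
The plan is to reduce both the hypotheses and the conclusions to purely algebraic inequalities on the capacity $\underline{P}$ and its conjugate $\overline{P}$, and then exploit the $2$-monotonicity that every belief function automatically enjoys (Definitions \ref{belief} and \ref{k-order}). First I would unwind the explicit formulas of Definitions \ref{geom-def} and \ref{dempster-def}. Using conjugacy $\overline{P}(\cdot)=1-\underline{P}(\cdot^c)$, the statement that the geometric rule dilates $A$ over the whole partition $\{E,E^c\}$ — i.e.\ $[\underline{P}(A),\overline{P}(A)]\subseteq[\underline{P}^{G}(A\mid E'),\overline{P}^{G}(A\mid E')]$ for each $E'\in\{E,E^c\}$ — is equivalent to the four product inequalities $\underline{P}(B\cap E')\leq\underline{P}(B)\,\underline{P}(E')$ as $B$ ranges over $\{A,A^c\}$ and $E'$ over $\{E,E^c\}$. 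Symmetrically, Dempster constriction of $A$ over $\{E,E^c\}$ is equivalent to the four inequalities $\overline{P}(B\cap E')\leq\overline{P}(B)\,\overline{P}(E')$. (The hypotheses $\underline{P}(E),\underline{P}(E^c)>0$, combined with super-additivity of $\underline{P}$, force $\underline{P}(E),\underline{P}(E^c)<1$ and hence $\overline{P}(E),\overline{P}(E^c)>0$, so all four denominators are strictly positive and every conditional quantity is well defined.)

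The heart of the argument is a single bridging computation. Fix the pair $(A,E)$ on the Dempster side. Rewriting through the complement, $\overline{P}(A\cap E)=1-\underline{P}(A^c\cup E^c)$, and applying $2$-monotonicity in the form $\underline{P}(A^c\cup E^c)\geq\underline{P}(A^c)+\underline{P}(E^c)-\underline{P}(A^c\cap E^c)$, one obtains after substituting $1-\underline{P}(A^c)=\overline{P}(A)$ and $1-\underline{P}(E^c)=\overline{P}(E)$ that
\begin{equation*}
\overline{P}(A\cap E)\leq\overline{P}(A)+\overline{P}(E)-1+\underline{P}(A^c\cap E^c).
\end{equation*}
Now feeding in the geometric-dilation inequality for the complementary pair, $\underline{P}(A^c\cap E^c)\leq\underline{P}(A^c)\,\underline{P}(E^c)=(1-\overline{P}(A))(1-\overline{P}(E))$, collapses the right-hand side exactly to $\overline{P}(A)\,\overline{P}(E)$, which is the desired Dempster-constriction inequality for $(A,E)$. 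Thus each geometric-dilation premise for a pair $(B,E')$ yields the Dempster-constriction inequality for the complementary pair $(B^c,E'^c)$; running this over all four combinations gives constriction under Dempster's rule. The reverse implication is the exact dual: I would use that the conjugate $\overline{P}$ of a belief function is $2$-alternating, write $\underline{P}(A\cap E)=1-\overline{P}(A^c\cup E^c)$, apply $\overline{P}(A^c\cup E^c)\leq\overline{P}(A^c)+\overline{P}(E^c)-\overline{P}(A^c\cap E^c)$, and substitute the Dempster-dilation inequality to recover the geometric-constriction inequality.

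Two bookkeeping points remain. The complementation symmetry $B\leftrightarrow B^c$, $E'\leftrightarrow E'^c$ is what makes the four-fold correspondence close up: dilation over the full partition supplies all four premises, and these land precisely on the four conclusions needed for constriction over the full partition. Strictness transports along the chain endpoint by endpoint — the $2$-monotonicity step is only a weak inequality, but the substitution step inherits the strictness of the dilation inequality, so strict dilation on every element of $\{E,E^c\}$ yields strict constriction on every element, matching the $\looparrowleft$ claim.

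I expect the main obstacle to be not the inequality chain but making the reduction of the first paragraph airtight: one must verify that interval inclusion really is equivalent to the conjunction of the two endpoint inequalities for each $E'$, keep the conjugacy substitutions consistent across the $A/A^c$ and $E/E^c$ slots, and confirm that the positivity hypotheses exclude degenerate denominators so that ``dilation'' and ``constriction'' are genuinely statements about the stated ratios. Everything downstream is then the single $2$-monotonicity estimate applied four times, together with its $2$-alternating dual.
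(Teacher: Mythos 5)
Your argument is correct, and it is essentially the standard one: the paper does not prove this theorem itself but imports it as Theorem 5.9 of Gong and Meng \cite{gong}, whose proof likewise reduces dilation/constriction under each rule to the four product inequalities $\underline{P}(B\cap E')\lessgtr\underline{P}(B)\underline{P}(E')$ and $\overline{P}(B\cap E')\lessgtr\overline{P}(B)\overline{P}(E')$ and bridges them via $2$-monotonicity and conjugacy exactly as you do. Your observation that only $2$-monotonicity (not full $\infty$-monotonicity) is used matches the remark following the theorem in the paper, which notes that the belief-function hypothesis is needed only so that Dempster's rule is defined.
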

The proof of Theorem \ref{geom-demps} only requires that $\underline{P}$ is convex, but we need the assumption that $\underline{P}$ is in fact a belief function otherwise we would not be able to use Dempster's rule (see Definition \ref{dempster-def}). As we can see, Dempster's and geometric rule contradict each other. 

Assuming that $\underline{P}$ is a belief function allows us to use updating rules that are otherwise inaccessible.
\begin{theorem}\label{imaging-theorem}
Let $E$ be the evidence collected by the agent, and assume that $\underline{P}$ is a belief function having mass function $m$ such that $\underline{P}(E)>0$. Consider any event $A\in\mathcal{F}$. We have that $(I,E) \looparrowleft A$ if and only if
\begin{align*}
    \sum_{B\subseteq A} &\left[\sum_{X\in\mathcal{F}} \mathfrak{f}(B,X)m(X) - m(B) \right]>0 \quad \text{and}\\
    \sum_{B\subseteq A^c} &\left[\sum_{X\in\mathcal{F}} \mathfrak{f}(B,X)m(X) - m(B) \right]>0.
\end{align*}
\end{theorem}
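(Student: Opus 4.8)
The plan is to reduce the claimed equivalence to two separate identities --- one controlling the lower probability of $A$ and one controlling the lower probability of $A^c$ --- each of which is a direct computation via the Möbius/mass representation of belief functions.

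First I would record that, by Definition \ref{garden}, the GGI rule outputs a genuine belief function $\underline{P}^{(I,E)}$ whose mass function is $m^I(\cdot\mid E)$. This needs a quick check that $m^I$ is a legitimate mass function: non-negativity is inherited from $\mathfrak{f}\geq 0$ and $m\geq 0$, while
$$\sum_{B\in\mathcal{F}} m^I(B\mid E)=\sum_{B\in\mathcal{F}}\sum_{X\in\mathcal{F}}\mathfrak{f}(B,X)m(X)=\sum_{X\in\mathcal{F}} m(X)\sum_{B\in\mathcal{F}}\mathfrak{f}(B,X)=\sum_{X\in\mathcal{F}} m(X)=1,$$
using constraint (a) on $\mathfrak{f}$ together with property (b) of $m$. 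Since $\underline{P}^{(I,E)}$ is then a belief function, property (c) of the mass function applies to it, giving $\underline{P}^{(I,E)}(C)=\sum_{B\subseteq C} m^I(B\mid E)$ for every $C\in\mathcal{F}$.

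Next I would evaluate this at $C=A$ and subtract the analogous expression $\underline{P}(A)=\sum_{B\subseteq A} m(B)$ for the prior, obtaining
$$\underline{P}^{(I,E)}(A)-\underline{P}(A)=\sum_{B\subseteq A}\left[\sum_{X\in\mathcal{F}}\mathfrak{f}(B,X)m(X)-m(B)\right],$$
so that the first displayed inequality in the theorem is literally the statement $\underline{P}^{(I,E)}(A)>\underline{P}(A)$. For the upper side I would invoke conjugacy: $\overline{P}(A)=1-\underline{P}(A^c)$ and, likewise, $\overline{P}^{(I,E)}(A)=1-\underline{P}^{(I,E)}(A^c)$. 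Hence $\overline{P}^{(I,E)}(A)<\overline{P}(A)$ is equivalent to $\underline{P}^{(I,E)}(A^c)>\underline{P}(A^c)$, and repeating the previous computation with $A^c$ in place of $A$ shows this is exactly the second displayed inequality. Conjoining the two equivalences gives $(I,E)\looparrowleft A$, which proves both directions simultaneously.

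I do not expect a genuine obstacle here: the core of the argument is bookkeeping with the Möbius representation and the conjugacy relation $\overline{P}=1-\underline{P}(\cdot^c)$. The one point that deserves care is justifying that the updated set function $\underline{P}^{(I,E)}$ is itself a belief function, so that property (c) is legitimately available on the updated side; this is precisely where constraints (a)--(c) on $\mathfrak{f}$ enter, and it is the only nontrivial input to the proof.
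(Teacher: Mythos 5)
Your proposal is correct and follows essentially the same route as the paper's proof: express $\underline{P}^{(I,E)}$ and $\underline{P}$ via their mass functions using property (c), observe that the first displayed inequality is literally $\underline{P}^{(I,E)}(A)>\underline{P}(A)$, and convert the upper-probability condition into the second inequality through the conjugacy relation $\overline{P}(A)=1-\underline{P}(A^c)$. The only difference is that you explicitly verify that $m^I(\cdot\mid E)$ is a legitimate mass function (via constraints (a) and (c) on $\mathfrak{f}$), a point the paper's proof takes for granted; this is a harmless and slightly more careful addition.
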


If the conditions in Theorem \ref{imaging-theorem} hold for every elements of partition $\mathcal{E}$, we write $(I,\mathcal{E}) \looparrowleft A$. 

The main point of this section is that if we are willing to formulate an assumption on the nature of lower probability $\underline{P}$ associated with set $\mathcal{P}$  representing our beliefs, then we are able to find constriction by using updating rules that in general do not allow for constriction for all $E\in\mathcal{E}$, like the geometric rule. We can also use entirely new updating techniques like Dempster's rule or GGI that are otherwise inapplicable.


\section{Conclusion}\label{concl}
In this paper, we show that, when updating an agent's opinions, there are at least three settings for constricting sets of probabilities (representing the beliefs), namely when belief revision is performed without evidence, when it is based on convex pooling, and when it is based on non-Bayesian updating. Also, we provide examples of procedures for every such framework. 

This is just the first step towards a deeper study of the constricting phenomenon, that we will carry over in the next future. In particular, we plan to find more instances in which constricting is possible, and to find a trait d'union linking these settings.

\section*{Acknowledgements}%
We would like to thank three anonymous referees for their helpgul and constructive comments. Michele Caprio would like to acknowledge funding from ARO MURI W911NF2010080.


\appendix
\section{Dubins-deFinetti conditioning}\label{fin-add}


In this section, we show how if we are willing to depart from the classical Kolmogorovian paradigm of probabilities, then we have additional opportunities for constriction. In particular, as we shall see, Bayes' rule can induce constriction if we allow probabilities to be merely finitely additive.
 
Suppose that we adopt Dubins-deFinetti conditioning (DdFC) framework; an in-depth exposition of DdFC can be found in \cite{defin3,dubins1,ragazzini_def_dub}. Notice also that in \cite{coletti16}, the authors compare generalized Bayes', geometric and Dempster's rules for belief functions in DdFC framework. For the sake of the present work, the two main differences with respect to the Kolmogorovian framework is that probabilities need not be countably additive, and that conditioning does not happen on sigma-fields, but rather on events, members of a partition of the state space. The following statements are true; we will provide illustrations for the first one, and the second one is shown similarly.



\begin{enumerate}
    \item If probability measures are finitely but not countably additive, then constriction can take place for all the elements of a countable partition;
    \item Recall that a probability $P$ is completely additive if the measurable union of a set of $P$-null events is $P$-null. If probability measures are countably but not completely additive, then constriction can take place for all the elements of an uncountable partition.
\end{enumerate}
Notice also that if probability measures are completely additive, then they must be discrete.

\begin{definition}\label{congl}
We say that probability measure $P$ is \textit{conglomerable in partition $\mathcal{E}$} when for every event $A$ such that $P(A\mid E)$ is defined for all $E\in\mathcal{E}$, and for all constants $k_1,k_2$, if  $k_1 \leq P(A\mid E)\leq k_2$ for all $E \in\mathcal{E}$, then $k_1 \leq P(A)\leq k_2$. 
\end{definition}
Definition \ref{congl} asserts that for each event $A$, if all the conditional probabilities over a partition $\mathcal{E}$ are bounded by two quantities, $k_1$ and $k_2$, then the unconditional probability for that event is likewise bounded by these two quantities \cite{fin_add_book}. De Finetti \cite{defin3} shows the non-conglomerability of finitely additive probability measures (FAPMs) in denumerable partitions.

Assume that instead of requiring $\mathcal{P}$ to be a set of countably additive probabilities, we allow it to be a set of  FAPMs. Then, weak and strict constriction can happen by Bayes updating $\mathcal{P}$ thanks to the non-conglomerability property of FAPMs. The two illustrations that we present in this section build on the example in \cite[page 92]{dubins1}, which we state here for motivating their construction.
\begin{example}\label{dubin_ex}
    \textbf{(Dubins)} Let $\Omega=\{A,B\}\times \{N=1,2,\ldots\}$. Stipulate that 
    \begin{itemize}
        \item $P(A)=P(B)=1/2$,
        \item $P(N=n\mid A)=2^{-n}$, for $n\in\{1,2,\ldots\}$, a countably additive conditional probability,
        \item $P(N=n\mid B)=0$, for $n\in\{1,2,\ldots\}$, a strongly finitely additive conditional probability.\footnote{Recall that a FAPM is strongly finitely additive if it admits countable partitions by null sets \cite{armstrong2}.}
    \end{itemize}
Then, $P(N=n)=2^{-(n+1)}>0$, for $n\in\{1,2,\ldots\}$, and (marginally) $P$ is merely finitely additive over the subalgebra generated by the partition $\mathcal{E}_N=\{\{N=1\},\{N=2\},\ldots\}$. $P$ displays non-conglomerability for the event $A$ in the partition $\mathcal{E}_N=\{\{N=1\},\{N=2\},\ldots\}$ as $P(A)=1/2$ and $P(A\mid N=n)=1$, for $n\in\{1,2,\ldots\}$.
\end{example}

\subsection*{Illustration 1 (weak constriction)}
Use Example \ref{dubin_ex} as follows. Consider a set $\mathcal{P}$ of probabilities on $\Omega=\{A,B\}\times \{N=1,2,\ldots\}$ such that $\mathcal{P}=\{P_\alpha, 0 < \alpha \leq 1\}$, where 
\begin{itemize}
    \item $P_\alpha(A)=\alpha$,
    \item $P_\alpha(N=n\mid A)=2^{-n}$, for $n\in\{1,2,\ldots\}$, a countably additive conditional probability,
    \item $P_\alpha(N=n\mid B)=0$, for $n\in\{1,2,\ldots\}$, a strongly finitely additive conditional probability. 
\end{itemize}
Note that neither $P_\alpha(N=n\mid A)$ nor $P_\alpha(N=n\mid B)$ depend upon $\alpha$. With respect to $\mathcal{P}$, we have $0<P_\alpha(A) \leq 1$. For each $0<\alpha <1$, we have non-conglomerability of $P_\alpha$ for the event $A$ in the partition $\mathcal{E}_N$ as $P_\alpha(A\mid N=n) = 1$, for $n\in\{1,2,\ldots\}$. Observe then that $1=P_1(A)=P_1(A\mid N=n)$, for $n\in\{1,2,\ldots\}$. Thus, Bayes-updating using the information $\{N =n\}$ from the partition $\mathcal{E}_N$ weakly-constricts $\mathcal{P}=\{P_\alpha, 0 < \alpha \leq 1\}$.

\subsection*{Illustration 2 (strict constriction)} 
Modify Example \ref{dubin_ex} as follows. Consider a set $\mathcal{P}$ of probabilities on $\Omega=\{A,B\}\times \{N=1,2,\ldots\}$ such that $\mathcal{P}=\{P_\alpha, 0 < \alpha < 1\}$, where 
\begin{itemize}
    \item $P_\alpha(A)=\alpha$, so that with respect to $\mathcal{P}$, we have $0<P_\alpha(A) < 1$,
    \item $P_\alpha(N=n\mid A)=(1-\alpha)2^{-n}$, for $n\in\{1,2,\ldots\}$,
    \item $P_\alpha(N=n\mid B)=\alpha 2^{-n}$, for $n\in\{1,2,\ldots\}$. 
\end{itemize}
Note that each of these two conditional probabilities is a merely finitely additive probability distribution over $N$ that depends on $\alpha$. In addition, observe that $P(N=n)=2\alpha(1-\alpha)2^{-n}>0$, which (for each $\alpha$) also is a merely finitely additive probability distribution over $N$.

By a Bayes' updating, for each $0 < \alpha < 1$ and each $n\in\{1,2,\ldots\}$, $P_\alpha(A\mid N=n)=1/2$. That is, for each $\alpha\neq 1/2$ with $0 < \alpha < 1$, there is non-conglomerability of $P_\alpha$ for the event $A$ in the partition $\mathcal{E}_N$. Whereas, $1/2 = P_{1/2}(A) = P_{1/2}(A\mid N=n)$. Thus, Bayes-updating using the information $\{N =n\}$ from the partition $\mathcal{E}_N$ strictly-constricts $\mathcal{P}=\{P_\alpha, 0 < \alpha < 1\}$.

\begin{remark}
    These two illustrations help to explain why Propositions \ref{prop_bayes} and \ref{prop_bayes2} are restricted to countably additive probabilities.
\end{remark}

\section{Proofs}\label{proof}

\begin{proof}[Proof of Proposition \ref{prop_bayes}]
Let $\underline{P}$ be a probability function that satisfies $\underline{P}(A) = \min_{P\in\mathcal{P}} P(A)$. By Lemma \ref{lemma_cond_bayes}, if $\underline{P}(\mathbf{X}^{A+}_{\underline{P}})>0$, then $\underline{P}(\mathbf{X}^{A-}_{\underline{P}})>0$ and $(B,E)$ does not strictly uniformly constrict $A$. That is, for each $P_{1i}(A)\in\mathcal{P}(A)$, $\underline{P}(A)\leq P_{1i}(A)$, so $(B,E)$ does not strictly uniformly constrict $A$ when $\underline{P}(A) = P_{1i}(A)$. Hence, if $(B,E)$ weakly uniformly constricts $A$, we have that $\underline{P}(\mathbf{X}^{A+}_{\underline{P}})=0$, and then $\underline{P}(\{x \in\mathbf{X}:\underline{P}^B(A\mid X=x)=\underline{P}(A)\})=1$. Let now $\overline{P}$ be a probability function that satisfies $\overline{P}(A) = \max_{P\in\mathcal{P}} P(A)$. By the same reasoning, $\overline{P}(\{x \in\mathbf{X}:\overline{P}^B(A\mid X=x)=\overline{P}(A)\})=1$, and then $(B,E)$ does not weakly uniformly constrict $A$ either.
\end{proof}

\begin{proof}[Proof of Proposition \ref{prop_bayes2}]
Assume (for a reductio proof) that on a set of $X$ values with $\mathcal{P}$-measure $1$ (i.e. with $P$-probability $1$, for all $P\in\mathcal{P}$), for each $x_i$ there exist $P_{1i}(A)$ and $P_{2i}(A)$ in $\mathcal{P}(A)$ such that for each $P(A\mid x_i)\in\mathcal{P}(A\mid x_i)$, either $P_{1i}(A) < {P}(A\mid x_i) \leq P_{2i}(A)$ or $P_{1i}(A) \leq {P}(A\mid x_i) < P_{2i}(A)$. Since $X$ is a simple random variable, define $P_1(A):=\min_i P_{1i}(A)$ and $P_2(A):=\max_i P_{2i}(A)$. Given Proposition \ref{prop_bayes}, assume $\mathcal{P}(A)$ is not a closed set. Without loss of generality, assume it is open 
below (the reasoning is parallel if $\mathcal{P}(A)$ is open above). So, $\underline{P}(A) < P_1(A)$. Then, there exists $P_0\in\mathcal{P}$ with $\underline{P}(A) < P_0(A) < P_1(A)$. Since for each $i$, $P_{1i}(A) < {P}_0(A\mid x_i)$ or $P_{1i}(A) \leq {P}_0(A\mid x_i)$, we also have that for each $i$, $P_0(A) < P_1(A) \leq {P}_0(A\mid x_i)$. But then $P_0(\mathbf{X}^{A+}_{P_0})=1$, which is a contradiction according to Lemma \ref{lemma_cond_bayes}.
\end{proof}

\begin{proof}[Proof of Theorem \ref{def_contr}]
Immediate from Definition \ref{contr_def} and Theorem \ref{def_coh1}.
\end{proof}

\begin{proof}[Proof of Theorem \ref{generic-no-data}]
We first show that the lower probability (LP) $\inf_{P\in \text{Conv}(\mathcal{P})}P(\cdot)$ of the convex hull of $\mathcal{P}$ and the LP $\inf_{P\in \text{ex}[\text{Conv}(\mathcal{P})]}P(\cdot)$ of the extrema of the convex hull of $\mathcal{P}$ coincide. To see this, pick any $A\in\mathcal{F}$. Since $\text{ex}[\text{Conv}(\mathcal{P})] \subseteq \text{Conv}(\mathcal{P})$, we have that
\begin{equation}\label{first_ineq_gnd}
    \inf_{P\in \text{Conv}(\mathcal{P})} P(A) \leq \inf_{P\in \text{ex}[\text{Conv}(\mathcal{P})]} P(A).
\end{equation}
Then, let $\emptyset\neq\text{ex}[\text{Conv}(\mathcal{P})]=\{P^{ex}_j\}_{j\in\mathcal{J}}$. For all $P\in \text{Conv}(\mathcal{P})$ and all $A\in\mathcal{F}$, we have that
\begin{align*}
    P(A)=\sum_{j\in\mathcal{J}} \alpha_j P^{ex}_j(A) &\geq \sum_{j\in\mathcal{J}} \alpha_j \underline{P}^{ex}(A)\\&=\underline{P}^{ex}(A):=\inf_{P\in \text{ex}[\text{Conv}(\mathcal{P})]} P(A),
\end{align*}
where $\{\alpha_j\}_{j\in\mathcal{J}}$ is a collection of positive reals such that $\sum_{j\in\mathcal{J}} \alpha_j=1$, which implies that
\begin{equation}\label{second_ineq_gnd}
    \inf_{P\in \text{Conv}(\mathcal{P})} P(A) \geq \inf_{P\in \text{ex}[\text{Conv}(\mathcal{P})]} P(A).
\end{equation}
By combining together \eqref{first_ineq_gnd} and \eqref{second_ineq_gnd} we obtained the desired equality.

Now, if 
$P^\star\in\text{ex}[\text{Conv}(\mathcal{P})]$, then there might be a collection $\{\tilde{A}\}\subseteq \mathcal{F}$ for which $P^\star(\tilde{A})=\underline{P}(\tilde{A})$ or $P^\star(\tilde{A})=\overline{P}(\tilde{A})$, so the constriction is weak for the elements of the collection, while $P^\star(A)>\underline{P}(\tilde{A})$ and $P^\star(A)<\overline{P}(\tilde{A})$, for all $A\in\mathcal{F}\setminus\{\tilde{A}\}$. If instead $P^\star=\sum_{j\in\mathcal{J}} \alpha_j P^{ex}_j$, $\alpha_j>0$ for all $j$, 
then $P^\star(A)\in(\underline{P}(A),\overline{P}(A))$, for all $A\in\mathcal{F}$, so we have $\text{\cjRL{no}} \looparrowleft A$, for all $A\in\mathcal{F}$.
\end{proof}

\begin{proof}[Proof of Theorem \ref{generic-no-data-cor}]
If $\mathcal{P}(A)$ is closed in the Euclidean topology and $P^\star(A)\in\partial_{\mathcal{B}([0,1])}\mathcal{P}(A)$, then $P^\star({A})=\underline{P}({A})$ or $P^\star({A})=\overline{P}({A})$, so the constriction is weak. If instead $P^\star(A)\in\text{int}_{\mathcal{B}([0,1])}\mathcal{P}(A)$, 
then $P^\star(A)\in(\underline{P}(A),\overline{P}(A))$, so we have $\text{\cjRL{no}} \looparrowleft A$.
\end{proof}

\begin{proof}[Proof of Lemma \ref{lem-no-contr}]
This proof draws on that of \cite[Lemma 5.1]{gong}. Fix any $A\in\mathcal{F}$. Because $\mathcal{P}$ is closed, there exists $P_{(A)} \in \mathcal{P}$ such that $P_{(A)}(A)=\underline{P}(A)$. Notice that subscript $(A)$ reminds us that this probability measure can vary with the choice of $A$. Then, we have that
\begin{align*}
    \underline{P}(A)=P_{(A)}(A)&=\sum_{E\in\mathcal{E}}P_{(A)}(A\mid E)P_{(A)}(E) \\
    &\geq \sum_{E\in\mathcal{E}}\underline{P}^\times(A\mid E)P_{(A)}(E)\\
    &\geq \sum_{E\in\mathcal{E}}\inf_{E\in\mathcal{E}}\underline{P}^\times(A\mid E)P_{(A)}(E)\\
    &=\inf_{E\in\mathcal{E}}\underline{P}^\times(A\mid E)\sum_{E\in\mathcal{E}}P_{(A)}(E)\\
    &=\inf_{E\in\mathcal{E}}\underline{P}^\times(A\mid E).
\end{align*}
The same argument applies for the upper probability of $A$, that is, if we pick $P^\prime_{(A)} \in \mathcal{P}$ such that $P^\prime_{(A)}(A)=\overline{P}(A)$, $P^\prime_{(A)}$ possibly different from $P_{(A)}$, then 
\begin{align*}
    \overline{P}(A) \leq \sum_{E\in\mathcal{E}}\overline{P}^\times(A\mid E)P^\prime_{(A)}(E) \leq \sup_{E\in\mathcal{E}} \overline{P}^\times (A\mid E).
\end{align*}
\end{proof}

\begin{proof}[Proof of Theorem \ref{thm-no-contr}]
Immediate from Lemma \ref{lem-no-contr}.
\end{proof}

\begin{proof}[Proof of Theorem \ref{suff_cond_contr}]
This proof comes from that of \cite[Theorem 2.3]{seidenfeld_dil}. Fix an event $A\in\mathcal{F}$ of interest, and let $\times\in\{B,G\}$. Pick any $P_{E_1\cdots E_{t-k}} \in {\mathcal{P}_\star}_{E_1\cdots E_{t-k}}(A) \cap \Sigma^-(A,E_{t-k+1}\cap\cdots\cap E_t)$. Then, we have that $P_{E_1\cdots E_{t-k}}(A)=\underline{P}_{E_1\cdots E_{t-k}}(A)$ because $P_{E_1\cdots E_{t-k}} \in {\mathcal{P}_\star}_{E_1\cdots E_{t-k}}(A)$ and $P_{E_1\cdots E_{t-k}}(A\cap E_{t-k+1}\cap\cdots\cap E_t)<{P}_{E_1\cdots E_{t-k}}(A){P}_{E_1\cdots E_{t-k}}(E_{t-k+1}\cap\cdots\cap E_t)$ because $P_{E_1\cdots E_{t-k}} \in \Sigma^-(A,E_{t-k+1}\cap\cdots\cap E_t)$. Then, 
\begin{align*}
    \underline{P}_{E_1\cdots E_{t-k}}(A) &= P_{E_1\cdots E_{t-k}}(A)\\
    &> \frac{P_{E_1\cdots E_{t-k}}(A\cap E_{t-k+1}\cap\cdots\cap E_t)}{{P}_{E_1\cdots E_{t-k}}(E_{t-k+1}\cap\cdots\cap E_t)}\\ 
    &= P_{E_1\cdots E_{t-k}}(A\mid E_{t-k+1}\cap\cdots\cap E_t)\\
    &\geq \underline{P}^\times_{E_1\cdots E_{t-k}}(A\mid E_{t-k+1}\cap\cdots\cap E_t).
\end{align*}
A similar argument gives us that $\overline{P}_{E_1\cdots E_{t-k}}(A)<\overline{P}^\times_{E_1\cdots E_{t-k}}(A\mid E_{t-k+1}\cap\cdots\cap E_t)$. So $E_{t-k+1}\cap\cdots\cap E_t$ dilates $A$ regardless of which updating rule $\times\in\{B,G\}$ the agent endorses. In turn, forgetting $E_{t-k+1}\cap\cdots\cap E_t$ constricts $A$, in symbols $(\text{IF}_\times,E_{t-k+1}\cap\cdots\cap E_t)\looparrowleft A$.
\end{proof}

\begin{proof}[Proof of Corollary \ref{suff_cond_contr:cor}]
Analogous to the proof of Theorem \ref{suff_cond_contr}.
\end{proof}

\begin{proof}[Proof of Theorem \ref{imaging-theorem}]
Fix any $A\in\mathcal{F}$. Recall that by \eqref{garden-eq}, we have that $m^I(A\mid E)=\sum_{X\in\mathcal{F}} \mathfrak{f}(A,X) m(X)$, for all $A\in\mathcal{F}$. Also, by property (c) of mass function $m$ associated to LP $\underline{P}$ (see Definition \ref{mass_funct}), we have that $\underline{P}(A)=\sum_{B\subseteq A}m(B)$. So,
$$\underline{P}^I(A\mid E)=\sum_{B\subseteq A}m^I(B\mid E)=\sum_{B\subseteq A}\sum_{X\in\mathcal{F}} \mathfrak{f}(B,X) m(X).$$

\textbf{Only if} Suppose $(I,E) \looparrowleft A$. Then, by definition of constriction, $\underline{P}^I(A\mid E)>\underline{P}(A)$ and $\overline{P}^I(A\mid E)<\overline{P}(A)$. This happens if and only if
\begin{align*}
    &\sum_{B\subseteq A}\sum_{X\in\mathcal{F}} \mathfrak{f}(B,X) m(X) > \sum_{B\subseteq A}m(B)\\
    &\iff \sum_{B\subseteq A} \left[\sum_{X\in\mathcal{F}} \mathfrak{f}(B,X)m(X) - m(B) \right]>0
\end{align*}
and
\begin{align*}
    &\sum_{B\subseteq A^c}\sum_{X\in\mathcal{F}} \mathfrak{f}(B,X) m(X) > \sum_{B\subseteq A^c}m(B)\\
    &\iff \sum_{B\subseteq A^c} \left[\sum_{X\in\mathcal{F}} \mathfrak{f}(B,X)m(X) - m(B) \right]>0.
\end{align*}
This latter is true because $\overline{P}^I(A\mid E)<\overline{P}(A) \iff 1-\underline{P}^I(A^c\mid E)<1-\underline{P}(A^c) \iff \underline{P}^I(A^c\mid E)>\underline{P}(A^c)$.
\textbf{If} Assume that 
\begin{align*}
    &\sum_{B\subseteq A} \left[\sum_{X\in\mathcal{F}} \mathfrak{f}(B,X)m(X) - m(B) \right]>0 \quad \text{and}\\
    &\sum_{B\subseteq A^c} \left[\sum_{X\in\mathcal{F}} \mathfrak{f}(B,X)m(X) - m(B) \right]>0.
\end{align*}
Then, we have
$$\underline{P}^I(A\mid E)=\sum_{B\subseteq A}\sum_{X\in\mathcal{F}} \mathfrak{f}(B,X) m(X) > \sum_{B\subseteq A}m(B) = \underline{P}(A)$$
and
\begin{align*}
    \underline{P}^I(A^c\mid E)=\sum_{B\subseteq A^c}\sum_{X\in\mathcal{F}} \mathfrak{f}(B,X) m(X) &> \sum_{B\subseteq A^c}m(B)\\
    &=\underline{P}(A^c),
\end{align*}
which implies $\overline{P}^I(A\mid E)<\overline{P}(A)$. This concludes the proof.
\end{proof}
 
\section*{Author Contributions}
Both of the authors contributed equally to this paper.

\bibliographystyle{plain}
\bibliography{ergodic_theory}

\end{document}